%% file: main.tex
\documentclass[12pt]{article}
\usepackage{graphicx} 
\usepackage{url}
\usepackage{amsmath}
\usepackage{amsthm}
\usepackage{amsfonts}
\usepackage{listings}
\usepackage{geometry}
\usepackage{tikz}
\usepackage{todonotes}
\usepackage{marginnote}
\usepackage{etoolbox}
\newtoggle{lmargin}
\newcommand{\alternatingtodo}[2][]{%
    \iftoggle{lmargin}%
    {%
        \todo[#1]{#2}%
        \togglefalse{lmargin}%
    }{%
        {%
            \let\marginpar\marginnote%
            \reversemarginpar%
            \todo[#1]{#2}%
        }%
        \toggletrue{lmargin}%
    }%
    \ignorespaces%
}
\usepackage[normalem]{ulem}
\usepackage{tikz-cd} 
\usepackage{enumitem}
\usepackage{subfigure}  
\usepackage{bbm}
\usepackage{natbib}
\usepackage{xcolor}
\usepackage{setspace}   
\usepackage{pdfpages}
\usepackage{subcaption}
\usepackage[export]{adjustbox}
\usepackage{wrapfig}

\newtheorem{lemma}{Lemma}
\newtheorem{theorem}{Theorem}

\newtheorem{Proposition}{Proposition}
\theoremstyle{definition}

\newtheorem{Example}{Example}
\newtheorem{Remark}{Remark}

\def\E{\mathbb{E}}
\def\EP#1#2{\mathbb{E}_{#1}{\left(#2\right)}}
\def\P#1{{\mathbb{P}}\left(#1\right)}

\def\calP{{\mathcal{P}}}

\def\calL{{\mathcal{L}}}
\def\calB{{\mathcal{B}}}
\def\calC{{\mathcal{C}}}
\def\calF{{\mathcal{F}}}

\def \R{\mathbf{R}}

\def\Real{\mathbb{R}}

\title{On a Class of Optimal Reinsurance Problems}
\author{N. D. Shyamalkumar and Tianrun Wang\thanks{Corresponding author: tianrun-wang@uiowa.edu}}

\input{math.tex}

\begin{document}
\maketitle

\begin{abstract}
De Finetti's optimal reinsurance is a set of contracts, one for each risk in a portfolio, that caps the retained aggregate variance to a pre-specified level while minimizing total expected loss. The premiums are determined using the expected value principle, and the safety loading is allowed to vary with the risks. The original formulation assumed that the risks were independent and restricted contracts to quota shares on individual risks. A recent variation surprisingly yields a closed form for the contracts, while allowing dependence between risks and permitting the contracts to depend on all risks, without restricting their functional form. We extend this to the case of an arbitrary convex functional as the risk measure and use duality tools from convex analysis to show the equivalence between the constrained and the penalized versions of the underlying optimization problem. To explicitly solve the penalized version for the variance and the conditional value at risk (CVaR) as the risk measure, we resort to either variational analysis or a rudimentary approach. We show that a rudimentary approach can also address the choice of VaR, a non-convex functional, as the risk measure. 
\end{abstract}

\section{Introduction}

A common way for an insurer to alter the risk profile of its insurance liabilities is through risk transfer via a reinsurance treaty. The desire for a change in the risk profile may be driven by capital considerations, or to gain the ability to write larger policies ({\it i.e.}, capacity expansion needs). Also, for expansion into new and complex products, the purchase of reinsurance may provide access to the reinsurer's significant expertise in pricing, and with reduced risk exposure, the insurer can gain confidence to expand into new products and markets. 

Once an insurer decides to reinsure and defines a preference ordering over reinsurance treaties within a feasible set, it is natural to seek an optimal reinsurance treaty.
In \citet{de1940problema}, the classical de Finetti optimal reinsurance problem, quota-share contracts form the feasible set of reinsurance treaties, and two such treaties are ordered in terms of the variance of the retained aggregate loss. In \citet{borch1960safety,borch1960attempt} and \citet{arrow1963uncertainty}, different preference orderings and feasible sets of insurance treaties were considered, and remarkably, without restricting the reinsurance indemnity function to a parametric class, they established the optimality of simple forms of indemnity functions. Since then, academic interest in optimal reinsurance has sustained and, in recent years, grown significantly. See \citet{albrecher2017reinsurance} for an excellent recent overview of the literature.   

A mathematically elegant relaxation allows for randomized reinsurance contracts. Such contracts allow for the retained amount to be determined through a random experiment conducted with the knowledge of the claim sizes. While there exist significant hurdles for the marketplace adoption of such reinsurance contracts, the sub-optimality of deterministic contracts in some settings and the fact that reinsurance always entails the assumption of counterparty default risk are two supporting arguments for its consideration - see, \citet{albrecher2019randomized, ASIMIT2013690, VINCENT2021125, ASIMIT2013690}, and \citet{CAI201413}. Mathematically, this relaxation allows one to pose the optimal reinsurance problem as the minimization of a lower semi-continuous law-invariant risk functional, say $\calP$, over a compact set of probability measures, where these probability measures represent the joint distribution of the risks and reinsured amounts. 
 
This paper is motivated by \citet{acciaio2025optimal}, which considers a class of optimal reinsurance problems in the multiple-risk setting that allow for randomized reinsurance contracts. In the case where there are finitely many constraints defined by a sub-level set of a lower semi-continuous vector-valued function, say $\calG$, \citet{acciaio2025optimal} are able to characterize the support of the optimal distribution using the assumed convex-linearity of the directional derivatives of $\calP$ and $\calG$ on a suitably chosen convex set, say $\calC$, containing the optimal distribution. Interestingly, as illustrated through examples, even the choice of a relatively small $\calC$ can provide useful characterizations of the support of the optimal distribution. One such example that particularly piqued our interest is their Example 6.1, which generalizes the classical de Finetti reinsurance problem (see \citet{de1940problema}).

In the formulation of the classical de Finetti reinsurance problem, the risk measure is variance, the risk $X_i$s are assumed to be independent, and $R_i$s are restricted to quota-share contracts, {\it i.e. $R_i=a_i X_i$} for $a_i\in [0,1]$.  A much less constrained formulation is studied in Example 6.1, wherein the $n$ risks may be dependent, and $R_i$s are no longer constrained to any specific functional form, and moreover need not even be measurable with respect to $\sigma\langle X_i \rangle$.  In particular, $R_i$s could depend on all of the $n$ risks and also additional sources of information and randomness, and hence need not be measurable with respect to $\sigma\langle X_1, \cdots, X_n \rangle$. This flexibility can not only account for the above-mentioned reinsurer counterparty default risk but also accommodate contracts with coverage linked to investment performance, as in some finite-risk reinsurance contracts (see \citet{culp2011structured} and \citet{albrecher2019randomized}). 

A natural question is whether the formulation of the above generalized version of the de Finetti reinsurance problem in terms of random vectors representing claim sizes and reinsured amounts would lead to a simpler analysis. To this end, we consider an insurer with a portfolio consisting of $n$ insurance risks $\X=(X_1, \ldots, X_n)$, seeking a reinsurance contract $\R = (R_1, \ldots, R_n)$ that minimizes the aggregate expected loss after reinsurance, under a constraint that a risk measure of the retained aggregate loss does not exceed a given threshold. Specifically, let $\rho$ be a risk measure, the insurer under the constraint that 
\[
\rho\left(\sum_{i=1}^{n} (X_i - R_i) \right) \leq c,
\]
seeks a reinsurance contract that minimizes the expected value of the loss, given by 
\[
\sum_{i=1}^{n} \left(X_i - R_i + (1 + \beta_i) \E{R_i}\right),
\]
where $\beta_i$ is the safety loading for the $i$-th contract, {\it i.e.} a surcharge by the reinsurer over pure premium for assuming the risk. We denote the underlying probability space by  $(\Omega,\calF,\mathbb{P})$. We let the risk measure $\rho:\calL^1((\Omega,\calF,\mathbb{P})) \to \Real \cup \pm \infty$. We note that the above optimization problem is feasible for $\mathbf{X} \in \calL^1(\Omega,\calF,\mathbb{P})$ provided $\rho(0)\leq c$, even in the case that $\rho$ equals the variance functional. Note that the variance is a concave functional on the space of measures, but convex on the space of random vectors. Note that a convex risk measure, as introduced by \citet{follmer2002convex}, requires, in addition to convexity, both monotonicity and translation equivariance. While standard deviation and variance are convex, they are translation invariant and not equivariant. Variance, moreover, is not positively homogeneous.

Our contributions are several-fold. We allow the risk functional $\rho$ to be convex, and lower semi-continuous on $\calL^1((\Omega,\calF,\mathbb{P}))$. When $\rho$ is the variance, we work with the weak topology, and when $\rho$ equals the conditional value-at-risk (CVaR), we employ the Wasserstein-1 distance. In both cases, the problem and its dual have closed-form solutions; we derive closed-form solutions for the optimal reinsurance. 
We note that we don't require the risk $\X$ to have a distribution that is absolutely continuous with respect to the Lebesgue measure. Finally, we derive a closed-form solution for the value-at-risk (VaR) risk functional under the assumption that the probability space is atomless - a mild assumption satisfied if we have an independent exogenous random number generator with a non-atomic distribution.

 {\textbf{Notations:}} We assume $\frac{x}{0}=+\infty$ for any $x\geq 0$ and $0 \cdot \infty=0$. Binary relations between two events, or two random vectors will mean in the $\mathbb{P}$ {\em almost sure} sense. For example, for $A,\, B \in \calF$, by $A\subseteq B$ we mean $\mathbb{P}(A\cap B^{\sf c})=0$, and for random variables $W_1,\, W_2$, we say $W_1=W_2$ to indicate $W_1=W_2$ except on a $\mathbb{P}-$null set. For any $k\geq 1$, we use $\xrightarrow[]{d}$ to denote weak convergence, and use bounded and continuous test functions $\phi: \Real^{k} \to \Real$ to define the weak topology on $\calL^1(\Omega,\calF,\mathbb{P})^k$, {\it i.e.} $\mathbf{W_n} \xrightarrow[]{d} \mathbf{W}$ if for all bounded and continuous test functions $\phi: \Real^{k} \to \Real$,  $\E{\phi(\mathbf{W_n})} \to  \E{\phi(\mathbf{W})}$.

\section{{Optimal Reinsurance Problem - Two Versions}}
We define the following two functions, both from $\calL^1(\Omega,\calF,\mathbb{P})^{2n}$ to $\Real$:
\begin{equation}
    \label{f-def}
    f(\R,\X):=\sum_{i=1}^n \beta_i \E{R_i},
\end{equation}
and
\begin{equation}
\label{g-def}
    g(\R,\X):= \rho\left(\sum_{i=1}^{n} (X_i - R_i) \right).
\end{equation}
We begin by observing that the problem as posed is the following constrained optimization problem:
\begin{equation}
    \label{Reins-C}
\begin{aligned}
C(c) := \arg\min_{\R} \quad & { f(\R,\X)} \\
\text{s.t.} \quad & 0 \leq R_i \leq X_i,\ \forall i \\
\text{and}\quad                  & { g(\R,\X)} \leq c
\end{aligned}
    \tag{Reins-C}
\end{equation}

We can see that when $c  \geq \rho \left( \sum_{i=1}^n X_i  \right)$, $C(c)=\{\mathbf{0}\}$ trivially.
We define the penalized version \ref{Reins-P} to be the following:
\begin{equation}
    \label{Reins-P}
    \begin{aligned}
P(\lambda) := \arg\min_{\R} \quad & { f(\R,\X)}+\lambda { g(\R,\X)} \\
\text{s.t.} \quad & 0 \leq R_i \leq X_i,\ \forall i 
\end{aligned}
    \tag{Reins-P}
\end{equation}
We note that the sets $C(c)$ and $P(\lambda)$ may be non-singleton, in which case there is no exact correspondence between $C(c)$ and $P(\lambda)$. However, we have the following theorem connecting $C(c)$ and $P(\lambda)$.

\begin{theorem}
\label{dual}
For $\X\in L^1((\Omega,\calF,\mathbb{P}))^{\times n}$,  let $\calE\subseteq L^1(\Omega,\calF,\mathbb{P})$ be defined by 
\[
\calE:=\left\{\sum_{i=1}^n (X_i - R_i):0 \leq R_i \leq X_i, \forall i \right\}.
\]
Also, let the risk functional $\rho:\calE \to \Real \cup \{\pm \infty\}$ be convex, and weakly lower semi-continuous. Then, we have the following:
\begin{enumerate}[label=\roman*.]
    \item For any $c \geq \rho(0)$ and $\lambda \geq 0$, both $C(c)$ and $P(\lambda)$ are non-empty.
    \item Moreover, for any $\rho(0) < c <\rho \left( \sum_{i=1}^n X_i  \right)$, there exists $\lambda^* > 0$ such that $C(c) \subset P(\lambda^*)$.
    \item For any $\lambda>0$,
\[
P(\lambda) \cap \left\{ \R: { g(\R,\X)}=c \right\}=C(c) \text{ or } \emptyset
\]
\end{enumerate}  
\end{theorem}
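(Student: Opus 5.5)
Throughout, I work with the feasible set $\calR:=\{\R: 0\le R_i\le X_i\ \forall i\}\subseteq L^1(\Omega,\calF,\mathbb{P})^n$. This set is convex. It is also closed for the relevant topology, with the caveat discussed below. It is uniformly integrable because each $|R_i|\le X_i\in L^1$.

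\textbf{Existence (part i).} I will use a direct-method argument in the weak topology of $L^1$. By Dunford--Pettis, uniform integrability makes $\calR$ relatively weakly compact, and convexity plus norm-closedness makes it weakly closed, so $\calR$ is weakly compact. The objective $f$ is linear and continuous, since $R\mapsto \E{R_i}$ is a continuous linear functional. Hence $f$ is weakly continuous. The map $\R\mapsto \sum_i(X_i-R_i)$ is affine and continuous from $\calR$ onto $\calE$, so $g=\rho\circ(\cdot)$ is weakly l.s.c.\ by hypothesis. Two consequences follow.
\begin{itemize}
\item For $P(\lambda)$ with $\lambda\ge 0$, the functional $f+\lambda g$ is l.s.c.\ on a compact set, so a minimizer exists. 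This needs $g>-\infty$ somewhere, or we interpret appropriately.
\item For $C(c)$, the constraint set $\{\R\in\calR: g\le c\}$ is a sublevel set of an l.s.c.\ function, hence weakly closed in a compact set and so compact. It is nonempty because $\R=\X$ gives $g=\rho(0)\le c$. Minimizing the continuous $f$ over it therefore gives $C(c)\ne\emptyset$.
\end{itemize}
The main technical point is which weak topology is meant. If it is the $\sigma(L^1,L^\infty)$ topology, the argument goes through as above. If it is convergence in distribution, I would instead use tightness of the laws of $(\R,\X)$, Skorokhod/coupling, and the law-invariance implicit in the examples.

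\textbf{Lagrange multiplier (part ii).} I will use convex duality in the form of Slater's condition. Define the value function $v(c)=\min\{f(\R):\R\in\calR,\ g(\R)\le c\}$. Because $f$ is linear and $g$ is convex on the convex set $\calR$, $v$ is convex and nonincreasing in $c$.

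Since $c>\rho(0)=g(\X)$, the point $\X$ is strictly feasible, so Slater's condition holds. The standard Lagrange duality theorem (e.g.\ Luenberger, convex programming in vector spaces) then yields $\lambda^*\ge 0$ with
\[
v(c)=\min_{\R\in\calR}\{f(\R)+\lambda^*(g(\R)-c)\},
\]
with complementary slackness $\lambda^*(g(\R^*)-c)=0$ for every $\R^*\in C(c)$.

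I need to show $\lambda^*>0$. If $\lambda^*=0$, then $v(c)=\min f=0$, attained only at $\R=\mathbf 0$ when all $\beta_i>0$. But $\mathbf 0$ is infeasible, because $g(\mathbf 0)=\rho(\sum X_i)>c$. This contradiction gives $\lambda^*>0$.

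It remains to show $C(c)\subset P(\lambda^*)$. Take $\R^*\in C(c)$. Then $f(\R^*)=v(c)$ and, by complementary slackness, $g(\R^*)=c$. So $f(\R^*)+\lambda^*(g(\R^*)-c)=v(c)$ equals the minimum of the Lagrangian, which means $\R^*\in P(\lambda^*)$. If some $\beta_i=0$, I would handle that degenerate case separately.

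\textbf{Part (iii).} Suppose some $\R^\circ\in P(\lambda)$ satisfies $g(\R^\circ)=c$. For any feasible $\R$ with $g(\R)\le c$,
\[
f(\R^\circ)+\lambda c\le f(\R)+\lambda g(\R)\le f(\R)+\lambda c,
\]
so $\R^\circ\in C(c)$. This shows $P(\lambda)\cap\{g=c\}\subseteq C(c)$.

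For the reverse inclusion, take $\R\in C(c)$. Then $f(\R)=f(\R^\circ)$. I also need $g(\R)=c$; if instead $g(\R)<c$, the chain above shows $f(\R)+\lambda g(\R)<f(\R^\circ)+\lambda c$, contradicting the minimality of $\R^\circ$. Hence $f(\R)+\lambda g(\R)=f(\R^\circ)+\lambda c$, so $\R\in P(\lambda)$ and $g(\R)=c$. The intersection therefore equals $C(c)$ whenever it is nonempty.

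The main obstacle is part (ii), in particular the positivity of $\lambda^*$ and the choice of an appropriate duality theorem, since $\rho$ may take the value $\pm\infty$.
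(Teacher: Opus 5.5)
Your proposal is correct. Parts ii and iii follow the paper's logic: Slater's condition at $\R=\X$, a Lagrange multiplier from duality (the paper proves its own version, Lemma~\ref{bv}, by a separating hyperplane), $\lambda^*>0$ because $\lambda^*=0$ would give $C(c)\subseteq P(0)=\{\mathbf 0\}$ with $\mathbf 0$ infeasible, and then complementary slackness. Both arguments tacitly need all $\beta_i>0$; the paper uses it through $P(0)=\{\mathbf 0\}$, and the statement omits it. Your two-sided argument for iii is more explicit than the paper's ``easy to see'' and does not need duality at all.

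The genuine difference is part i.

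\emph{The paper's route.} It reads ``weakly'' as convergence in distribution and passes to joint laws. It gets compactness from tightness and Prokhorov plus Portmanteau, and lower semicontinuity of $g$ from the continuous mapping theorem.

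\emph{Your route.} You use $\sigma(L^1,L^\infty)$-compactness of the actual feasible set, via Dunford--Pettis and Mazur. You do not need your hedge about the distributional reading. Convergence in $L^1$ implies convergence in distribution, so the hypothesis makes $\rho$, hence $g$, norm-l.s.c. Since $g$ is convex, its sublevel sets in the norm-closed convex set $\calR$ are convex and norm-closed. By Mazur they are $\sigma(L^1,L^\infty)$-closed, so $g$ is l.s.c.\ in exactly the topology in which $\calR$ is compact.

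\emph{What each buys.} Your route yields a minimizer that is a genuine random vector on the given $(\Omega,\calF,\mathbb{P})$, paired with the given $\X$. The paper's compactness is really compactness of the set of laws with $\X$-marginal $\calL(\X)$. A weak limit of laws of $(\R_k,\X)$ need not be realizable on the original space: if $\calF=\sigma(\X)$, rapidly oscillating $R_k$ converge in law to a randomized conditional law of $R$ given $\X$. The paper glosses over this point. In exchange, the paper's route gets existence from distributional lower semicontinuity alone, without convexity. Yours uses convexity essentially, to upgrade strong to weak lower semicontinuity.
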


\begin{proof} Consider the convex set $\calR$ defined by, 
\[
\calR:=\{ (\R,\X):0 \leq R_i \leq X_i, \, i =1,\ldots,n\} \subset L^1(\Omega,\calF,\mathbb{P})^{\times 2n}.
\]
We choose to endow $\calR$, to be precise, the space of equivalence sets defined by distributional equivalence, with the weak topology. In other words, $(\R_k,\X)$ converges to $(\R,\X)$ on $\calR$ if these random vectors converge weakly.  
$\calR$ is sequentially precompact, by tightness and Prokhorov's theorem (see Theorem 5.1 of \citet{billingsley1999convergence}). To show $\calR$ is closed, we observe that its corresponding measures are those with an $n$-marginal equal to $\calL(\X)$ and, by rearrangement of coordinates, a support which is an $n$-product of the closed subset of a first quadrant of the Euclidean plane containing the points below the diagonal. With this characterization, and by using the Portmanteau theorem (see \citet{billingsley1999convergence}), we see that $\calR$ is sequentially closed. But since the weak topology on an Euclidean space is metrizable, we have $\calR$ is compact.

\noindent {\bf Proof of i.:}  Note that, in view of the above properties of $\calR$, to show that both $C(c)$ and $P(\lambda)$ are nonempty it suffices to show that ${ f(\R,\X)}$ is linear and continuous with respect to the weak topology, and ${ g(\R,\X)}$ is convex, and lower semi-continuous with respect to the weak topology. 

The linearity of $f$ follows from that of the expectation operator, and from the convexity of $\rho$,  and that $g$ is $\rho$ composed with a linear function of $\calR$ follows the convexity of $g$. 

Weak continuity of $f$ from the fact that the coordinates of $\mathbf{R}$ over $\calR$ is a uniformly integrable set of random variables. This is so because uniform integrability, along with weak convergence, implies convergence of the first moment; see Theorem 3.5 in \citet{billingsley1999convergence}. %
%
%
By the continuous mapping theorem, the map
\begin{eqnarray*}
T:&\calR &\to\quad \calE\\
&(\X,\R) &\to \quad \sum_{i=1}^n (X_i - R_i)
\end{eqnarray*}
is weakly continuous. 
Since $\rho$ is weakly lower semi-continuous on $\calE$, $g$ is weakly lower semi-continuous on $\calR$. 

\noindent{\bf Proof of ii. and iii.:} 
Now let $\rho(0) < c <\rho \left( \sum_{i=1}^n X_i  \right)$. Since \eqref{Reins-C} is a convex programming on the convex subset $\calR$ of the real vector space  $L^1((\Omega,\calF,\mathbb{P}))^{2n}$ and Slater's condition holds since $\R=\X$ is strictly feasible, strong duality is guaranteed by Lemma \ref{bv} in the Appendix. As a result, there exists dual variable $\lambda^*\geq 0, $ such that $C(c) \subseteq P(\lambda^*)$. If $\lambda^*=0$, $P(\lambda^*)=\{\mathbf{0}\}$, contradicting $c <\rho \left( \sum_{i=1}^n X_i  \right)$. Therefore $\lambda^*>0$, and by the complementary slackness condition, every $\R^* \in C(c)$ satisfies
\[
{ g(\R,\X)}=c.
\]
Now it is easy to see for any $\lambda>0$,
\[
P(\lambda) \cap \left\{ \R: { g(\R,\X)}=c \right\}=C(c) \text{ or } \emptyset,
\] and the proof is complete.

\end{proof}

Theorem \ref{dual} provides an algorithm to solve $C(c)$, by iterating over $\lambda$ until the constraint is tight. Note that $\rho$ need not be weakly lower semi-continuous on $L^1(\Omega,\calF,\mathbb{P})$.

\section{Variance as the Risk Measure}

In this section, we consider \eqref{Reins-C} and \eqref{Reins-P} for the case when the risk measure $\rho$ equals the variance operator, {\it i.e.}  $\rho(Z)=\Var{Z}=\E{Z^2}-\left(\E{Z}\right)^2$. In Lemma \ref{var_convex_semi} of the Appendix, we establish that such a choice implies convexity and weak lower semi-continuity of $\rho$.

We note that in the original problem considered by de Finetti, with independent risks and reinsurance treaties constrained to quota share reinsurance, {\it i.e.} $R_i = a_i X_i$, the optimal proportions $a_i$ are determined to be
\[
a_i = \left(1 - \frac{\beta_i \mathbb{E}[X_i]}{2 \lambda_{\text{Fin}} \mathrm{Var}(X_i)} \right)_+
\]
with $\lambda_{\text{Fin}}$ can be solved numerically, as in \citet{de1940problema} and section 8.2.6.1 of \citet{albrecher2017reinsurance}. \citet{glineur2006finetti} provides a complete proof using the KKT conditions, and later \citet{pressacco2011mean} extended it to the group-correlation case and gave a closed-form expression. \citet{Barone2008IME} summarizes de Finetti's work and provides a geometric perspective on optimal portfolios with and without correlation. \citet{Lampaert2005SAJ} compares different kinds of proportional insurance using a numerical example.

We assume, without loss of generality, that $\beta_i$s are distinct, as otherwise we can combine the risks having the same safety loadings, solve the optimization problem, and split the reinsurance receivable to satisfy the constraints $0 \leq R_i \leq X_i$, for $i=1,\ldots,n$. Also, we assume that the safety loadings are positive, since $\beta_i=0$ implies that $X_i$ is the optimal $R_i$. 

\begin{Proposition} \label{Penalized}
When $\beta_i$s are distinct and positive then, for any $\lambda,c \geq   0$, $C(c)$ and $P(\lambda)$ are both singletons in the space $\calL^1(\Omega,\calF,\mathbb{P})$. For any $c>0$, there exists a $\lambda \geq 0$ such that \( C(c)=P(\lambda)\).
\end{Proposition}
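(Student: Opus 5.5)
The plan is to reduce both problems to a choice of the aggregate retained loss $S:=\sum_{i=1}^n (X_i-R_i)$. For fixed $S$, the allocation of $T:=\sum_i X_i - S$ among the contracts will turn out to be forced. Existence of minimizers is already given by Theorem \ref{dual}(i), since $\rho=\mathrm{Var}$ is convex and weakly lower semi-continuous (Lemma \ref{var_convex_semi}) and $\rho(0)=0\le c$. So the proposition is really a uniqueness statement, plus an application of Theorem \ref{dual}(ii).

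\textbf{Step 1 (allocation lemma).} Fix a random variable $T$ with $0\le T\le \sum_i X_i$. Consider minimizing $\sum_i\beta_i\E{R_i}$ over $0\le R_i\le X_i$ with $\sum_i R_i=T$.
\begin{itemize}
\item Order the loadings so that $\beta_{(1)}<\dots<\beta_{(n)}$. This is possible because they are distinct.
\item Define the greedy allocation pointwise in $\omega$: fill $R_{(1)}$ up to $X_{(1)}$ first, then $R_{(2)}$, and so on until $T$ is exhausted.
\item Any other feasible allocation differs from the greedy one on a set of positive probability. On that set some mass sits on a strictly more expensive contract while a cheaper one is not full.
\item Moving that mass to the cheaper contract strictly lowers the cost, because the loadings are distinct.
\end{itemize}
Hence the greedy allocation $\R^{g}(T)$ is the unique minimizer a.s. Its cost $\kappa(T):=\sum_i\beta_i R^g_i(T)$ is pointwise strictly increasing in $T$, because every $\beta_i>0$. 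Any minimizer of \eqref{Reins-P} or \eqref{Reins-C} must equal $\R^g(T)$ for its own $T$. Otherwise we could replace it by $\R^g(T)$, which leaves $S$, and hence $g$, unchanged while strictly decreasing $f$.

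\textbf{Step 2 (uniqueness for $P(\lambda)$).} Let $\R,\R'\in P(\lambda)$, with aggregates $S,S'$.
\begin{itemize}
\item By convexity of the feasible set and linearity of $f$, the midpoint is feasible.
\item We have $\mathrm{Var}\bigl(\tfrac{S+S'}{2}\bigr)=\tfrac12\mathrm{Var}(S)+\tfrac12\mathrm{Var}(S')-\tfrac14\mathrm{Var}(S-S')$.
\item If $\lambda>0$, optimality of the midpoint therefore forces $\mathrm{Var}(S-S')=0$, so $S'=S+k$ for a constant $k$.
\item Then $\mathrm{Var}(S)=\mathrm{Var}(S')$, so $f(\R)=f(\R')$, i.e. $\E{\kappa(T)}=\E{\kappa(T-k)}$.
\item Strict monotonicity of $\kappa$ forces $k=0$. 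Step 1 then gives $\R=\R'$.
\end{itemize}
If $\lambda=0$, the problem is just the minimization of $f$, whose unique minimizer is $\mathbf 0$ since $\beta_i>0$.

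\textbf{Step 3 (uniqueness for $C(c)$ and the identification).}
\begin{itemize}
\item If $c\ge \mathrm{Var}(\sum_i X_i)$, then $C(c)=\{\mathbf 0\}=P(0)$.
\item If $0<c<\mathrm{Var}(\sum_iX_i)$, Theorem \ref{dual}(ii) gives $\lambda^*>0$ with $\emptyset\ne C(c)\subseteq P(\lambda^*)$. Since $P(\lambda^*)$ is a singleton by Step 2, $C(c)=P(\lambda^*)$ and $C(c)$ is itself a singleton.
\item The case $c=0$ is not covered by Theorem \ref{dual}(ii), and is not required for the equality $C(c)=P(\lambda)$. Here feasibility forces $S\equiv k$ constant, with $k\le \operatorname{ess\,inf}\sum_iX_i$. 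Since $\kappa$ is strictly increasing, minimizing $f$ forces the largest such $k$, i.e. $k=\operatorname{ess\,inf}\sum_iX_i$. Step 1 then determines $\R$ uniquely, so $C(0)$ is a singleton.
\end{itemize}

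The main obstacle I expect is Step 1 done rigorously almost surely. One must show that any feasible allocation differing from the greedy one on a positive-probability event can be strictly improved by a measurable reallocation. I would construct this explicitly: on the event where $R_{(j)}>0$ while $R_{(i)}<X_{(i)}$ for some $i<j$, shift the amount $\min\bigl(R_{(j)},\,X_{(i)}-R_{(i)}\bigr)$ from contract $(j)$ to contract $(i)$. The expected cost then drops by at least $(\beta_{(j)}-\beta_{(i)})$ times the expectation of a nonnegative, nonzero variable.
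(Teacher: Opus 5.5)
Your proof is correct and follows the same route as the paper. The midpoint variance identity is the paper's ``non strictly convex direction'' argument, which forces the aggregates of all minimizers to differ by a constant. Your strict-monotonicity-of-$\kappa$ step plays the role of the paper's normalization $m={\rm ess\,inf}\,M$. Your greedy allocation is exactly \eqref{prop1_defn_R*}, and Theorem \ref{dual}(ii) then transfers uniqueness from $P(\lambda^*)$ to $C(c)$. Your separate treatment of $c=0$ is more careful than the paper's, which simply asserts $C(0)=\{\X\}$. That holds only when ${\rm ess\,inf}\sum_i X_i=0$; otherwise the optimum retains the constant ${\rm ess\,inf}\sum_i X_i$, exactly as you describe.
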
 
\begin{proof}
Without loss of generality, let $0 < \beta_1 < \cdots < \beta_n$. We note that the nontrivial case is when $c$ satisfies $0<c < \text{Var}\left(\sum_{i=1}^{n} X_i \right)$. This is so as, $C(0)=\{\X\}$, and when $c \geq \text{Var}\left(\sum_{i=1}^{n} X_i \right)$, $C(c)=P(0)=\{\mathbf{0}\}$. We consider the non-trivial case below.

 Combining Lemma \ref{var_convex_semi} of the Appendix and Theorem \ref{dual}, there exists $\lambda^* >0$ such that $C(c) \subseteq P(\lambda^*)$, and both $C(c)$ and $P(\lambda)$ are nonempty for every $\lambda \geq 0$.
Recall from the definition of $g$ in \eqref{g-def}, and variance as the risk measure,  
\[
{g(\R,\X)}= \text{Var}\left(\sum_{i=1}^{n} (X_i - R_i) \right).
\]
For an arbitrary $\lambda > 0$, since $f(\R,\X)+\lambda \cdot g(\R,\X)$ is a convex function on a convex set $\calR$, $P(\lambda)$ is a convex set. From the non strictly convex direction of $\Var{\cdot}$, there exists an $M\in L^1(\Omega)$, such that every $\R^* \in P(\lambda)$ satisfies 
\[
R^*_1+R^*_2+\dots+R^*_n = M-m \geq 0,
\]
where $m$ is a constant. It is easy to see $m={\rm ess \,inf}\, M$, and that 
\begin{equation}\label{prop1_defn_R*}
R^*_i=\left(M-m-\sum_{j=1}^{i-1} X_j\right)_+ \wedge X_i,     
\end{equation}
due to $\beta_i$'s being distinct. Note that for $\R\in P(\lambda)$ that does not satisfy the above, we will have $\sum\beta_iR_i \geq \sum\beta_iR_i^*$, with strict inequality on a set of positive measure, a contradiction. Hence, $P(\lambda)$ is a singleton for every $\lambda>0$. Hence, $C(c) \subset P(\lambda^*)$, implying $C(c)=P(\lambda^*)$ is a singleton.
\end{proof}

While it may appear natural to endow the set $\calR$ with the $L^1$ metric topology, i.e. $d((\R_1,\X),(\R_2,\X)):=\int\|\R_1-\R_2\|_1\,{\rm d}\mathbb{P}$, under this topology while we retain semi-continuity of the mean-variance disutility, $\calR$ fails to be compact. Let $D$ and $\calM$, a set of probability measures, be defined as 
\[
D=\{(x_1,x_2)\in \Real^2:0 \leq x_1 \leq x_2 < \infty\},\, \hbox{and }  
\calM=\{\eta \in \calP\left(D^n\right): \pi_{e\#}\eta=\mu\},
\]
where $\pi_e: \Real^{2n} \rightarrow \Real^n$ is the projection onto the even coordinates.    We can define the quotient map $q$ from $\calR=\{ (\R,\X):0 \leq R_i \leq X_i,  \,  i =1,\ldots,n\}$ to the set of probability measures $\calM,$ where $(\R_1,\X) \sim  (\R_2, \X)$ if and only if they have the same joint law.%
\begin{wrapfigure}{l}{0.35\textwidth}
    \begin{tikzcd}[row sep=large, column sep=3em]
        \calR \arrow[rr, "{f, g}"] \arrow[d, "q"'] & & \Real \\
        \calM \arrow[urr, dashed, "{\tilde{f}, \tilde{g}}"'] & &
    \end{tikzcd}    
\caption{Quotient Diagram}
\label{fig:wrapfig}
\end{wrapfigure}
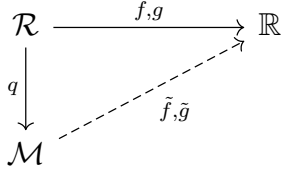%
Although $\calM$ inherits the weak topology from $\calR$ as the quotient topology, we note that the vector space structure of $\calM$ is not inherited from that of $\calR$. Besides, $\calM$ is a convex subset of $\calP\left(D^{n}\right)$, but not necessarily the image of $\calR$ under the quotient map. If $(\Omega,\calF, \mathbb{P})$ is an atomless probability space, which is equivalent to it being isomorphic to $([0,1],\calB([0,1]),\lambda|_{[0,1]})$, then the quotient map is surjective. As a result, since $f$ and $g$ are law invariant, we can define the $\tilde{f},\tilde{g}: \calM \to \Real$ such that the quotient diagram commutes. However, $\tilde{g}$ - the variance map - is concave on $\calM$, which prevents the use of standard strong duality results such as Lemma \ref{bv} of the Appendix.

The solution for \eqref{Reins-C} in \citet{acciaio2025optimal} utilizes directional derivatives of the objective and constraint functionals. If those directional derivatives can be represented as integral operators, Propositions 4.2 and 4.7 of \citet{acciaio2025optimal} can be used to work out the support of the measure of the optimal reinsurance treaties. It turns out the support is degenerate, meaning that the reinsurance amount is fully determined by the risk.

Instead, we solve the penalized problem \eqref{Reins-P}, based on variational analysis.
 The next theorem characterizes the solution set $P(\lambda)$ for $\lambda\geq 0$.

\begin{theorem}
When $0 < \beta_1 < \cdots < \beta_n$ and $\lambda \geq0$, the single element $\R^\ast$ of $P(\lambda)$ has the following functional form: for $1\leq k \leq n$,   
\begin{equation}
     \label{solOptimalreinsurance}
     R_k^{\ast} = \left(\sum_{i=k}^nX_i-\frac{\beta_k}{2\lambda}-\sigma\right)_+ \wedge X_k,
 \end{equation}
where $\sigma$ is a constant.
\end{theorem}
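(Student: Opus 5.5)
The plan is to combine the nested structure already obtained in Proposition \ref{Penalized} with the first-order (variational) optimality conditions of \eqref{Reins-P}, written pointwise. The case $\lambda=0$ is immediate: $P(0)=\{\mathbf{0}\}$, and with the convention $\beta_k/0=+\infty$ formula \eqref{solOptimalreinsurance} also gives $R_k^\ast=0$. So fix $\lambda>0$ and let $\R^\ast$ be the unique element of $P(\lambda)$, which exists by Proposition \ref{Penalized}. Write $S=\sum_i X_i$ for the total risk, $Y=S-\sum_i R^\ast_i$ for the retained aggregate loss, and set
\[
\sigma:=\E{Y}.
\]

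\textbf{Step 1 (pointwise optimality conditions).} For a feasible direction $H$ in the $k$-th coordinate, meaning $0\le R^\ast_k+tH\le X_k$ for small $t>0$, the one-sided Gâteaux derivative of $f+\lambda g$ at $\R^\ast$ is
\[
\beta_k\E{H}-2\lambda\,\E{(Y-\sigma)H}=\E{\big(\beta_k-2\lambda(Y-\sigma)\big)H},
\]
and optimality forces this to be $\ge 0$. I would test it with $H=\mathbbm{1}_A\,(X_k-R^\ast_k)\wedge 1$ and with $H=-\mathbbm{1}_A\,R^\ast_k\wedge 1$, where $A$ is an arbitrary event. These choices give, almost surely:
\begin{itemize}
\item on $\{R^\ast_k<X_k\}$ we have $2\lambda(Y-\sigma)\le\beta_k$;
\item on $\{R^\ast_k>0\}$ we have $2\lambda(Y-\sigma)\ge\beta_k$.
\end{itemize}
Hence on $\{0<R^\ast_k<X_k\}$,
\[
Y=\sigma+\frac{\beta_k}{2\lambda}.
\]

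\textbf{Step 2 (use the nested form \eqref{prop1_defn_R*}).} By \eqref{prop1_defn_R*}, the coverages are filled in order of index. If $R^\ast_k>0$ then $R^\ast_j=X_j$ for all $j<k$, and if $R^\ast_k<X_k$ then $R^\ast_j=0$ for all $j>k$. The verification splits into three cases on $\omega$.

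\emph{Case $0<R^\ast_k<X_k$.} Here the lower coverages are full and the higher ones are empty, so $Y=\sum_{i\ge k}X_i-R^\ast_k$. Equating this with $Y=\sigma+\beta_k/(2\lambda)$ from Step 1 gives
\[
R^\ast_k=\sum_{i\ge k}X_i-\frac{\beta_k}{2\lambda}-\sigma,
\]
which lies in $(0,X_k)$, so it agrees with \eqref{solOptimalreinsurance}.

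\emph{Case $R^\ast_k=0$.} Then $R^\ast_j=0$ for all $j\ge k$, so $Y\ge\sum_{i\ge k}X_i$. Step 1 gives $Y\le\sigma+\beta_k/(2\lambda)$. Together these make the positive part in \eqref{solOptimalreinsurance} vanish.

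\emph{Case $R^\ast_k=X_k$.} Then $Y=\sum_{i>k}(X_i-R^\ast_i)\le\sum_{i>k}X_i$. Step 1 gives $Y\ge\sigma+\beta_k/(2\lambda)$. Together these show $\sum_{i\ge k}X_i-\beta_k/(2\lambda)-\sigma\ge X_k$, so the truncation $\wedge X_k$ returns $X_k$.

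The three cases exhaust $\Omega$, which proves \eqref{solOptimalreinsurance} with $\sigma=\E{Y}$.

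\textbf{Main obstacle.} I expect the delicate step to be Step 1: making the variational inequality rigorous and pointwise in $L^1$. This requires three things:
\begin{itemize}
\item the perturbations must stay feasible and make the variance finite;
\item the derivative of the variance in direction $H$ must be justified via dominated convergence, using bounded truncations of $H$;
\item the inequalities must pass from every event $A$ to almost-sure statements.
\end{itemize}
A second subtlety is that $\sigma$ is defined implicitly as $\E{Y}$, where $Y$ itself depends on $\sigma$. This is harmless, since the theorem only asserts the existence of some constant $\sigma$, and $\sigma$ would later be pinned down by a fixed-point equation. I would note explicitly that the ordering $\beta_1<\cdots<\beta_n$ is what makes the thresholds $\sigma+\beta_k/(2\lambda)$ increasing in $k$. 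This is consistent with, and could alternatively replace, the appeal to \eqref{prop1_defn_R*}.
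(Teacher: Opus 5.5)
Your proposal is correct and follows essentially the paper's route: you combine first-order variational conditions at the optimum with the nested structure \eqref{prop1_defn_R*}, and your slack-scaled feasible directions give both one-sided conditions for every $k$ at once, a cleaner replacement for the paper's $\delta$-interior perturbations and its recursion over $k$. One bookkeeping point: your cases $R_k^\ast=0$ and $R_k^\ast=X_k$ tacitly use $R_k^\ast<X_k$ and $R_k^\ast>0$ respectively, so treat $\{X_k=0\}$ separately, where the formula holds trivially (the paper's claimed partition into $A_k,B_k,C_k$ has the same blemish).
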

\begin{proof}
Since $P(0)=\{\mathbf{0}\}$, and the above specification of $R_k^\ast$ is identically zero for $\lambda=0$, we consider only the case $\lambda>0$ below. Recall that \eqref{prop1_defn_R*}, in the proof of Proposition \ref{Penalized}, contains an expression for $\R^*$. So, it suffices to show that the functional form of $\R^\ast$ in \eqref{prop1_defn_R*} coincides with that given in \eqref{solOptimalreinsurance}. 

It is easy to see,  for $i<j$, since $\beta_i<\beta_j$ and $\mathbf{X}$ is non-negative,
\begin{equation}
\label{disjoint}
    \{R_i^*<X_i\} \cap \{R_j^*>0\}=\emptyset.
\end{equation}
Further, let   
\[
A_k:=\{0<R_k^*<X_k\},\, B_k:=\{R_k^*=0\},\, \hbox{and}\, C_k:=\{R_k^*=X_k\},
\]
which, for each $1\leq k\leq n$ form a partition of $\Omega$. From \eqref{disjoint}, $R_{k+1}^*=\cdots=R_n^*=0$ on $A_k \cup B_k$. For $1\leq k \leq n$, let 
\[
Z:=\sum_{i=1}^n X_i-\sum_{i=1}^n R_i^*,\,  \sigma:=\E{Z},\, {\bar S}_k:=\sum_{i=k+1}^n X_i,\,  c_k:=\frac{\beta_k}{2 \lambda}+\sigma.
\]
Also, let $l$ denote the objective function of \eqref{Reins-P}, that is
\[
l(\R):={ f(\R,\X)}+\lambda {g(\R,\X)},
\]

We identify and exploit the problem's recursive nature. We begin with a variational analysis on $R_1$. We say $\epsilon_1\in \Real$ is admissible for a bounded non-negative random variable $T_1$ with $\E{T_1}>0$, if $0 \leq R_1^*-\epsilon_1 T_1 \leq X_1$. Since $\R^*$ is optimal, 
\[
l(R_1^*-\epsilon_1 T_1, R_2^*, \ldots, R_n^*)\geq l(\R^*),
\]
which is equivalent to 
\[
-\beta_1 \epsilon_1 \E{T_1}+\lambda\left[\epsilon_1^2\Var{T_1}+2\epsilon_1\left(\E{ZT_1}-\E{Z}\E{T_1}\right)\right] \geq 0.
\]
After some simplifications, we have 
\begin{equation}
\label{eta1}
    \epsilon_1c_1 \leq \epsilon_1 \left(\frac{\E{ZT_1}}{\E{T_1}}\right)+\frac{\epsilon_1^2 \Var{T_1}}{2\E{T_1}}=\epsilon_1 \left(\frac{\E{ZT_1}}{\E{T_1}}\right)+o(\epsilon_1).
\end{equation}

We now show that $Z=c_1$ on $A_1$. Hence, considering the case of  $\mathbb{P}(A_1)>0$, we have that ${\mathbb P}(\{\delta \leq R^*_1 \leq X_1-\delta\})>0$, for small enough $\delta>0$. For $T_1$, nonnegative, bounded, and supported on $\{\delta \leq R^*_1 \leq X_1-\delta\}$ for small enough $\delta>0$, $\epsilon_1$ in a small enough ball around $0$ are admissible. Hence,  for such $T_1$ with $\E{T_1}>0$, \eqref{eta1} yields 
\[
c_1 = \frac{\E{ZT_1}}{\E{T_1}}.
\]
This, with the flexibility in the choice of $T_1$, and since $\delta$ can be arbitrarily small, implies that $Z=c_1$    on $A_1$. Hence, on $A_1$, since $R_2^*=\ldots=R_n^*=0$,  we have $R_1^*=X_1+\bar{S}_1-c_1$. As a result, on $A_1$, $X_1+\bar{S}_1>c_1$  and $\bar{S}_1<c_1$.  
 
Now we show similarly that, $\bar{S}_1\geq c_1$ on $C_1$. Hence, in the case of  $\mathbb{P}(C_1)>0$, we consider $T_1$, nonnegative, bounded, and supported on $C_1$ with $\E{T_1}>0$ in \eqref{eta1}. For such $T_1$, $\epsilon_1>0$ in a small enough ball around $0$ are admissible. Thus, we get
\[
c_1 \leq \frac{\E{(\bar{S}_1-R_{2}^*-\cdots-R_n^*)T_1}}{\E{T_1}} \leq \frac{\E{\bar{S}_1T_1}}{\E{T_1}}. 
\]
The flexibility in the choice of $T_1$, and $R_i^*\leq X_i$ imply that $\bar{S}_1\geq c_1$ on $C_1$. 
A similar one-sided argument yields that on $B_1$, $X_1+\bar{S}_1\leq c_1$. In summary, we have this important characterization of $A_1$,$B_1$, and $C_1$:
 \[
 A_1=\{X_1+\bar{S}_1>c_1\}\cap\{\bar{S}_1<c_1\}, B_1= \{X_1+\bar{S}_1\leq c_1\}, \hbox{ and } C_1 =\{\bar{S}_1\geq c_1\}.
 \]
 Since \[R_1^*=\begin{cases}
     X_1+\bar{S}_1-c_1, &\omega\in A_1\\
     0, &\omega\in B_1\\
     X_1, &\omega\in C_1
 \end{cases},
 \]
 an alternate expression for $R_1^*$ is 
 \[
 R_1^*=(X_1+\bar{S}_1-c_1)_+ \wedge X_1=  \left(\sum_{i=1}^n X_i-c_1\right)_+ \wedge X_1.
\] 

The recursive nature arises from the fact that on $A_1 \cup B_1$, $R_2^*=\cdots=R_n^*=0$, and on $C_1$, since $R_1^*=X_1$, the sub-problem to determine $R_i^*$ for $i\geq 2$ is akin to the original but with the $n-1$ risks $X_i$, $i\geq 2$. By an analogous development as above, we have on $A_k$,
$X_k+\bar{S}_k-R_k^*=c_k$, for  $1 \leq k \leq n$, along with 
\[
A_k=\{X_k+\bar{S}_k>c_k\}\cap\{\bar{S}_k<c_k\},\, B_k= \{X_k+\bar{S}_k\leq c_k\},\, \hbox{and } C_k =\{\bar{S}_k\geq c_k\}.
\]
As a result, analogously, we have for  $1 \leq k \leq n$,
 \begin{equation}
     R_k^*=(X_k+\bar{S}_k-c_k)_+ \wedge X_k= \left(\sum_{i=k}^nX_i-\frac{\beta_k}{2\lambda}-\sigma\right)_+ \wedge X_k.
 \end{equation}
\end{proof}

\begin{Remark}
If the loadings are not distinct, {\it e.g.} $\beta_i=\beta_{i+1}=\cdots=\beta_j, j>i$ with other values distinct, then let $\R^*$ be the above solution derived by breaking ties arbitrarily. The set of all solutions to \eqref{Reins-P} in the above setting is the set of $\R$ satisfying  
\[
\sum_{k=i}^j R_k=\sum_{k=i}^jR^*_k,\; \hbox{and}\; R_k=R_k^*,\; \hbox{for}\; k<i \; \text{or}\; k>j. 
\]
Note that the lack of distinctness does not guarantee the uniqueness of the solution $\R^*$.
\end{Remark}

\noindent{\bf Algorithm to compute $\E{Z}$:} We comment on algorithms to compute $\E{Z}$, denoted by $\sigma$ in \eqref{solOptimalreinsurance}, for a fixed value of $\lambda$. Towards this, for 
\[Z_{\eta}=\sum_{i=1}^n X_i-\sum_{i=1}^n R_i^{**}=\begin{cases}
    \sum_{i=j}^n X_i,\;&\frac{\beta_{j-1}}{2\lambda}+\eta \leq\sum_{i=j}^n X_i \leq \frac{\beta_j}{2\lambda}+\eta, \;j=1,2,...,n;\\ \\
    \frac{\beta_j}{2\lambda}+\eta, \;&\sum_{i=j+1}^n X_i<\frac{\beta_j}{2\lambda}+\eta<\sum_{i=j}^n X_i,\;j=1,2,...,n,
\end{cases}\]
where $\beta_0=-\infty$, $\sum_{i=n+1}^nX_i=0$, and 
\[
R_i^{**}=\left(T_i - \eta\right)_+\wedge X_i,\, \hbox{where }  T_i:=\sum_{j=i}^n X_j \;-\;\frac{\beta_i}{2\lambda}, \, i=1,\dots,n.
\]
Since the dependence of $\E{Z_{\eta}}$ on $\eta$ arises from $\sum_{i=1}^n \E{R_i^{**}}$, and since the intervals of non-constancy of $R_i^{**}$'s,  
\[
I_i:=(T_i-X_i,\;T_i),\, i=1,\dots,n,
\]
are disjoint, $\E{Z_{\eta}}$ is non-decreasing Lipschitz with Lipschitz constant at most 1.  
We deliberately avoided computing derivatives with respect to $\eta$ here, as the probability space may not be atomless. Moreover, note that $\E{Z_{\eta}}$ is above the diagonal at $0$ and below the diagonal at $\sum_{i=1}^n \E{X_i}$. With a mild condition, such as the support of $X_n$ containing the interval 
\[
\left[0,\sum_{i=1}^n \E{X_i}+\frac{\beta_n}{2\lambda}\right],
\]
and its distribution function being continuous, it is easy to see that $\E{Z_{\eta}}$ is contractive on $[0,\sum_{i=1}^n \E{X_i}]$, and hence the unique fixed point equals $\sigma$. One could use monotonicity of  $h(\eta):=\eta-\E{Z_{\eta}}$ to compute $\sigma$ via the bisection method, or simply use fixed point iterations, with either algorithm guaranteeing a linear rate of convergence.  For the constrained version \eqref{Reins-C}, we find the $\lambda$ that attains the required variance bound via bisection or any other numerical root-finding algorithms. We illustrate the above algorithm with an example from \citet{acciaio2025optimal}.

\begin{Example}\label{Ex1}
Consider two independent random variables $X_1$ and $X_2$ with $X_1$ following a  $\hbox{Gamma}(1/2, 1/2)$ distribution and $X_2$  a (shifted) Pareto distribution with p.d.f. given by
\[
f_{X_2}(x) = 324\,(x + 3)^{-5}, \quad x \geq 0.
\]
Note that $\mathbb{E}[X_1] = \mathbb{E}[X_2] = 1$ and $\mathrm{Var}(X_1) = \mathrm{Var}(X_2) = 2$. Let $\beta_1=0.1$ and $\beta_2 = 0.25$, which reflects the fact that safety risk loadings are typically higher for heavy-tailed risks. Consider an insurer that seeks to bound the retained variance by $c = 2$. 
We simulate $10$ million samples of $(X_1,X_2)$ and using the algorithm above to calculate the parameters for \eqref{solOptimalreinsurance} to be $\sigma = 1.8029$ and $\lambda = 0.0222$. For this purpose, note that the explicit expression for $Z$, in the case of $n=2$, is given below:
\[
Z= X_1+X_2- R_1^*-R_2^*=\begin{cases}
X_1+X_2,\;&X_1+X_2 \leq \frac{\beta_1}{2\lambda}+\sigma\\ \\
\frac{\beta_1}{2\lambda}+\sigma, \;&X_2<\frac{\beta_1}{2\lambda}+\sigma<X_1+X_2\\ \\
 X_2, \;&\frac{\beta_1}{2\lambda}+\sigma \leq X_2<\frac{\beta_2}{2\lambda}+\sigma \\ \\
\frac{\beta_2}{2\lambda}+\sigma  ,\;& X_2 \geq \frac{\beta_{2}}{2\lambda}+\sigma
\end{cases}
\]
All analyses were conducted in the R software environment (version 4.3.2; see \citet{R}). See Figure \ref{fig:side_by_side} for the root finding and Figure \ref{fig:fixedpoint} for the fixed point iterations. In Figure \ref{fig:side_by_side} (a), we use {\tt uniroot} in R for root finding.

\begin{figure}[htbp]
    \centering
    \subfigure[Plot of $h(\cdot)$]{
        \includegraphics[width=0.45\textwidth]{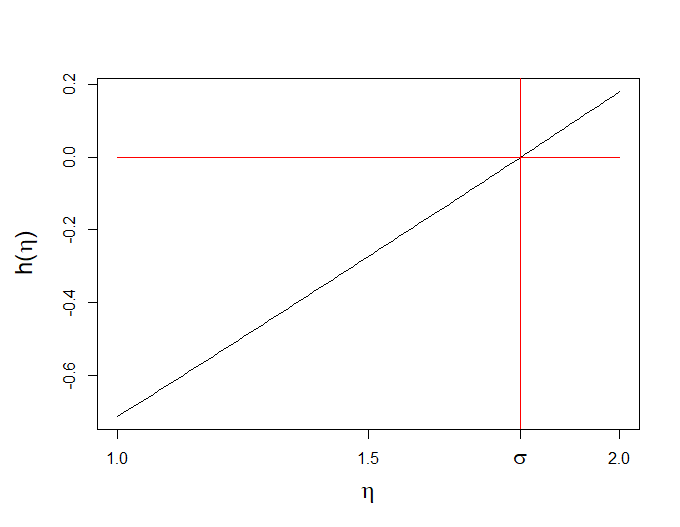}
    }
    \hspace{0.05\textwidth}
    \subfigure[Plot of objective of \eqref{Reins-P}]{
        \includegraphics[width=0.45\textwidth]{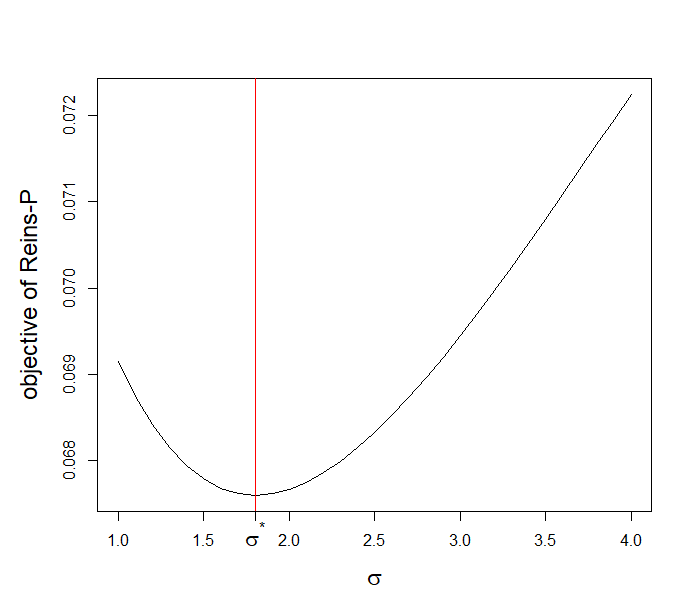}
    }
    \caption{The left picture shows $h(\eta):=\eta-\E{Z_{\eta}}$ is increasing for an absolutely continuous $\mu$, with a zero at $\sigma=1.8029$. The right picture shows the objective of \eqref{Reins-P} minimizes over all $R_i=\left(\sum_{i=k}^nX_i-\frac{\beta_k}{2\lambda^*}-\sigma\right)_+ \wedge X_k$ at this zero.}
    \label{fig:side_by_side}
\end{figure}

\begin{figure}[htbp]
    \centering
    \includegraphics[width=0.8\textwidth]{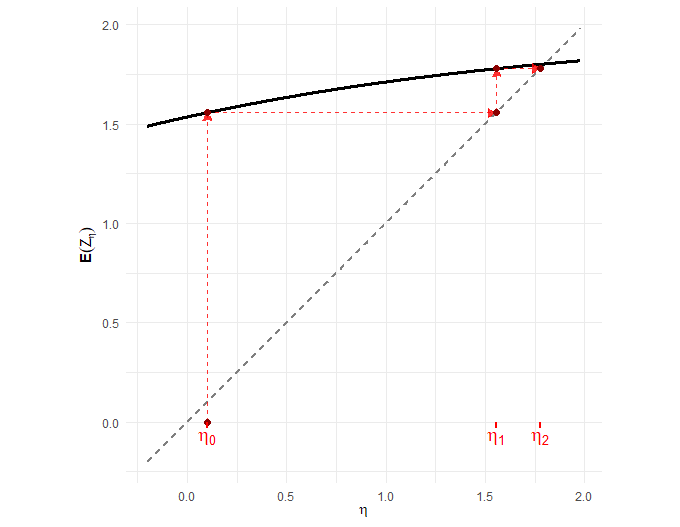}
    \caption{Fixed point iterations converge to the fixed point $\sigma=1.8029$. Initial point at $\eta_0=0.1$ followed by two iterations.}
    \label{fig:fixedpoint}
\end{figure}

\end{Example}

\begin{Remark}
While feasible reinsurance payoffs may make use of information beyond that contained in $\X$, it should not come as a surprise that optimal payoffs will be a function of $\X$. This is so, as we can always replace any reinsurance payoff $\R$ by $\E{\R|\X}$, leaving the expectation of the linear part unchanged while decreasing the variance.
\end{Remark}

\section{Conditional Value at Risk as the Risk Measure}
In this section, we let $\rho(\cdot)=\CVaR{\alpha}{\cdot}$, where the conditional value at risk (CVaR) of a random variable $X \in L^1(\Omega,\calF,\mathbb{P})$ of confidence $\alpha$ is given by,
\[
\CVaR{\alpha}{X}=\int_{\alpha}^1 \vec{F}_X(t) {\rm d}t,\,\hbox{with}\, \vec{F}_X(\alpha):=\inf\left\{ x\vert F_X(x) \geq \alpha \right\}.
\]
Note that $ \vec{F}_X$ is a generalized inverse of $F_X$, with $\vec{F}_X(\alpha)$ a definition of the \emph{Value‐at‐Risk} of $X$ at level $\alpha$, denoted as \(\VaR_\alpha(Z)\). On $L^1((\Omega,\calF,\mathbb{P}))$, the Wasserstein-$1$ metric is defined by,
\[
W_1(U,V)= \min\left\{\EP{\gamma}{|X-Y|}: \gamma \hbox{ is a coupling of } \calL(U) \hbox{ and } \calL(V)\right\}.
\]
Since $\calE$ is uniformly integrable, from Proposition 7.1.5 of \citet{luigi2008gradient}, we have that on $\calE$, the weak topology is metrized by $W_1$. We have the following lemma,
\begin{lemma}
\label{cvar_convex_lsc}
    $\CVaR{\alpha}{\cdot} : \calE \to \Real$ is convex, weakly continuous, and moreover is 1-Lipschitz with respect to $W_1$.
\end{lemma}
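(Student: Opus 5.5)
The plan is to prove the three properties in the order Lipschitz, then continuity, then convexity. Continuity will follow from the Lipschitz bound together with the fact, quoted just before the statement, that $W_1$ metrizes the weak topology on $\calE$. Convexity comes from a variational representation. Throughout I read $\CVaR{\alpha}{\cdot}$ through its quantile-integral definition $\int_\alpha^1 \vec{F}_Z(t)\,{\rm d}t$. This is a functional of the law of $Z$ only, so every step can be phrased in terms of distributions and couplings.

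For the Lipschitz bound, take $U,V\in\calE$ and let $\gamma$ be an optimal coupling. I will use the quantile (comonotone) coupling $(\vec{F}_U(T),\vec{F}_V(T))$ with $T$ uniform on $(0,1)$, together with the classical one-dimensional identity
\[
W_1(U,V)=\int_0^1\left|\vec{F}_U(t)-\vec{F}_V(t)\right|{\rm d}t .
\]
Then
\[
\left|\CVaR{\alpha}{U}-\CVaR{\alpha}{V}\right|\le\int_\alpha^1\left|\vec{F}_U(t)-\vec{F}_V(t)\right|{\rm d}t\le W_1(U,V).
\]
So the functional is 1-Lipschitz. Its finiteness on $\calE$ is clear, since every element of $\calE$ lies between $0$ and $\sum_i X_i\in L^1$. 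Weak continuity then follows immediately: on the uniformly integrable set $\calE$, weak convergence is equivalent to $W_1$ convergence.

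For convexity, I will use the Rockafellar--Uryasev representation (the paper's normalization has no factor $1/(1-\alpha)$, which does not affect convexity):
\[
\CVaR{\alpha}{Z}=\min_{x\in\Real}\left\{(1-\alpha)x+\E{(Z-x)_+}\right\}.
\]
To verify it, I will evaluate at $x=\vec{F}_Z(\alpha)$ and apply the first-order condition, both via the quantile-transform representation $Z\overset{d}{=}\vec{F}_Z(T)$. Given the representation, convexity is direct. For $Z_\theta=\theta Z_1+(1-\theta)Z_2$ and any $x_1,x_2$, set $x=\theta x_1+(1-\theta)x_2$. Convexity of $(\cdot)_+$ gives
\[
(1-\alpha)x+\E{(Z_\theta-x)_+}\le \theta\left[(1-\alpha)x_1+\E{(Z_1-x_1)_+}\right]+(1-\theta)\left[(1-\alpha)x_2+\E{(Z_2-x_2)_+}\right].
\]
Taking the infimum over $x_1,x_2$ yields the claim. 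Note that $\calE$ itself is convex, since the constraints $0\le R_i\le X_i$ are preserved under convex combinations.

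The main obstacle is justifying the quantile representations cleanly when $\mathbb{P}$ may have atoms and the laws may be discrete. The key point is that all three statements depend only on laws. So I can move to an auxiliary atomless space carrying a uniform $T$ and work with $\vec{F}_U(T),\vec{F}_V(T)$ there, without needing $\Omega$ itself to be atomless.

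The other delicate point is the first-order condition in the Rockafellar--Uryasev formula at a flat piece of $F_Z$. I will handle it by checking that the right derivative in $x$ is $(1-\alpha)-\mathbb{P}(Z>x)$ and the left derivative is $(1-\alpha)-\mathbb{P}(Z\ge x)$. At $x=\vec{F}_Z(\alpha)$ these have the correct signs, so this $x$ is a minimizer.
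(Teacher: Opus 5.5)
Your argument is correct. For the Lipschitz bound and weak continuity it is the same as the paper's: use the one-dimensional quantile identity $W_1(U,V)=\int_0^1|\vec{F}_U(t)-\vec{F}_V(t)|\,{\rm d}t$, drop the part of the integral over $(0,\alpha)$, and then use the fact that $W_1$ metrizes the weak topology on the uniformly integrable set $\calE$.

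Where you differ is convexity. The paper just cites coherence of CVaR (subadditivity plus positive homogeneity). You instead derive the Rockafellar--Uryasev formula from the quantile definition, using the one-sided derivatives at $x=\vec{F}_Z(\alpha)$. You then apply joint convexity of $(x,Z)\mapsto(1-\alpha)x+\E{(Z-x)_+}$ together with partial minimization. The paper's route is a one-line citation. Yours is self-contained and rests on the same representation the paper itself uses later in the proof of Theorem~\ref{P(lambda)_for_cvar}.

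Your handling of the normalization also has a side benefit. With the stated definition $\int_\alpha^1\vec{F}_Z(t)\,{\rm d}t$, your chain gives a Lipschitz constant of exactly $1$, as the lemma claims. The paper's displayed chain begins with $(1-\alpha)\left|\CVaR{\alpha}{U}-\CVaR{\alpha}{V}\right|$, i.e.\ the normalized CVaR, and for that version it only yields the constant $1/(1-\alpha)$.

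One wording caution: convexity is not a law-only statement, because the law of $\theta Z_1+(1-\theta)Z_2$ depends on the joint law of $(Z_1,Z_2)$. So only the single-variable identities should be moved to the auxiliary atomless space. The pointwise inequality $(Z_\theta-x)_+\le\theta(Z_1-x_1)_++(1-\theta)(Z_2-x_2)_+$ should stay on $\Omega$, which is what your argument actually does.
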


\begin{proof}
CVaR is a coherent risk measure, see \citet{artzner1999coherent}, and hence it is both subadditive and positively homogeneous. These two properties imply that CVaR is a convex risk measure.  Now, since
\begin{align*}
    (1-\alpha)\left| \text{CVaR}_\alpha(U) - \text{CVaR}_\alpha(V) \right| &= \left|  \int_{\alpha}^{1} \vec{F}_U(u) \, du -  \int_{\alpha}^{1} \vec{F}_V(u) \, du \right|\\
    &\le  \int_{\alpha}^{1} \left| \vec{F}_U(u) - \vec{F}_V(u) \right| \, du\, \\
    &\le  \int_{0}^{1} \left| \vec{F}_U(u) - \vec{F}_V(u) \right| \, du\, =  W_1(U, V),
\end{align*}
we have CVaR is 1-Lispchitz with respect to $W_1$, which implies weak continuity of CVaR.
\end{proof}
From Lemma \ref{cvar_convex_lsc} and Theorem \ref{dual}, for every $0<c<\text{CVaR}_\alpha \left( \sum_{i=1}^n X_i \right)$, $C(c)$ is nonempty, and there exists $\lambda^*>0$ such that $C(c) \subset P(\lambda^*)$. 
The following theorem characterize the solution set $P(\lambda)$.

\newpage
\begin{theorem}
\label{P(lambda)_for_cvar}
    Let $K:=\frac{\lambda}{1-\alpha}$, and $S := \sum X_i$ be the total loss. For any $\lambda>0$, $P(\lambda)$ is the set of $\R^*$ satisfying the following properties,
    \begin{enumerate}
        \item If $\beta_i \ne K$, 
        \[
R_i^*=\mathbf{I}(\beta_i<K) \left(\left(S-\sum_{j=1}^{i-1}X_j-q \right)_+ \wedge X_i\right).
\]
        \item If $\beta_i = K$, we have pointwise
        \[
        0 \leq R_i^* \leq \left(S-\sum_{j=1}^{i-1}X_j-q \right)_+ \wedge X_i,
        \]
    \end{enumerate}
    where $q\in[0,\text{VaR}_\alpha(S)]$ satisfies the following
\begin{equation}\label{con:q}
\begin{cases}
J_-'(q) \le 0 \le J_+'(q), & \hbox{if } q \in (0,\text{VaR}_\alpha(S)) \\[6pt]
J_+'(q) \ge 0, & \hbox{if } q = 0 \\[6pt]
J_-'(q) \le 0, & \hbox{if } q =\text{VaR}_\alpha(S)
\end{cases},    
\end{equation}
where, defining $\beta_0:=0$,
\[
J_-'(q):=\sum_{i=1}^n \left((K-\beta_i)_+-(K-\beta_{i-1})_+ \right) \P{q \leq \sum_{j=i}^n X_j} + \lambda,
\]
and
\[
J_+'(q):=\sum_{i=1}^n \left((K-\beta_i)_+-(K-\beta_{i-1})_+ \right) \P{q < \sum_{j=i}^n X_j} + \lambda
\]
\end{theorem}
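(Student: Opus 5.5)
The plan is to linearize the CVaR term with the Rockafellar--Uryasev representation
\[
\text{CVaR}_\alpha(Z)=\min_{q\in\Real}\Big\{q+\tfrac{1}{1-\alpha}\E{(Z-q)_+}\Big\}.
\]
Then \eqref{Reins-P} becomes a joint minimization over $(\R,q)$ of
\[
\Phi(\R,q)=\sum_{i}\beta_i\E{R_i}+\lambda q+K\,\E{\Big(S-\sum_i R_i-q\Big)_+},\qquad 0\le R_i\le X_i .
\]
A pair $(\R^*,q^*)$ minimizes $\Phi$ exactly when two conditions hold. First, $\R^*$ minimizes $\Phi(\cdot,q^*)$. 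Second, $q^*$ minimizes $J(q):=\inf_{\R}\Phi(\R,q)$. Moreover, $\R^*\in P(\lambda)$ if and only if $(\R^*,q^*)$ is a joint minimizer for some $q^*$. So the proof splits into an inner problem for fixed $q$ and an outer one-dimensional convex problem in $q$. Throughout I take $0<\beta_1<\dots<\beta_n$, as in the previous section.

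\textbf{Inner problem.} For fixed $q$ the integrand separates in $\omega$. At each $\omega$ one minimizes
\[
\sum_i\beta_i r_i+K\Big(s-\sum_i r_i-q\Big)_+\quad\text{over } 0\le r_i\le x_i .
\]
This is a fractional-knapsack problem. Each unit of the excess $(s-q)_+$ can be covered by risk $i$ at cost $\beta_i$ or left retained at cost $K$. It is therefore optimal to cover the excess greedily with the cheapest loadings first, using only those with $\beta_i<K$. Write $T_i:=\sum_{j\ge i}X_j$, so $T_1=S$ and $T_{n+1}=0$. The greedy rule reads the excess in layers and gives
\[
r_i=\mathbf I(\beta_i<K)\big((T_i-q)_+\wedge X_i\big)=\mathbf I(\beta_i<K)\big((T_i-q)_+-(T_{i+1}-q)_+\big),
\]
which agrees with the layer formula in the statement. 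An exchange argument shows that this minimizer is unique whenever $\beta_i\neq K$. The exception is that any amount in $[0,\text{layer}_i]$ is optimal when $\beta_i=K$, which is item~2 of the statement. The pointwise minimizer is measurable, so it gives the minimizer of $\Phi(\cdot,q)$, and any other choice is strictly worse on a set of positive measure.

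\textbf{Outer problem.} Substituting the inner solution, write $c_i:=\min(\beta_i,K)=K-(K-\beta_i)_+$ and $c_0:=0$, which is consistent with $\beta_0=0$. Summation by parts gives
\[
J(q)=\lambda q+\sum_{i=1}^n (c_i-c_{i-1})\,\E{(T_i-q)_+}.
\]
Here $c_i-c_{i-1}=(K-\beta_{i-1})_+-(K-\beta_i)_+\ge 0$. This rewriting is also convenient for carrying the retained tail $K(T_{n+1}\ldots)$ correctly; I would double-check that the final term $K\,\E{(S-q)_+}$ minus the covered layers telescopes as claimed. It follows that $J$ is convex, since it is a nonnegative combination of convex functions. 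Its one-sided derivatives come from $\frac{d^\pm}{dq}\E{(T-q)_+}=-\P{T>q}$ and $-\P{T\ge q}$ respectively, and they yield exactly $J'_\pm$ in the statement. I expect a sign bookkeeping issue at this point, since the coefficients appear as $(K-\beta_i)_+-(K-\beta_{i-1})_+$, and I would reconcile it carefully. For a convex function on an interval, the first-order conditions \eqref{con:q} are necessary and sufficient for a minimizer.

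\textbf{Main obstacle.} The main obstacle is justifying the restriction $q\in[0,\text{VaR}_\alpha(S)]$.
\begin{itemize}
\item For $q\ge \text{VaR}_\alpha(S)$, the coefficients sum to $K$ and each $T_i\le S$. Hence $J'_+(q)\ge\lambda-K\,\P{S>q}\ge \lambda-K(1-\alpha)=0$, so nothing is gained beyond $\text{VaR}_\alpha(S)$.
\item For $q<0$ the objective is linear with slope $\lambda-c_n$, because all $T_i\ge 0>q$. A negative $q$ would therefore never improve on $q=0$ unless that slope is negative. I would handle this case via $\text{VaR}_\alpha(Z)\ge0$ for $Z=S-\sum R_i\ge 0$: the Rockafellar--Uryasev minimizer set contains points of $[\text{VaR}_\alpha(Z),\,\cdot\,]\subset[0,\infty)$, so the constraint $q\ge0$ loses nothing.
\end{itemize}
This gives the boundary cases of \eqref{con:q} and completes the characterization of $P(\lambda)$.
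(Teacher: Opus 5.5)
Your route is the paper's: Rockafellar--Uryasev linearization, pointwise greedy minimization for fixed $q$, convexity of $J$, one-sided derivatives, and first-order conditions on $[0,\VaR_\alpha(S)]$. Writing $J$ by summation by parts is a reasonable variant. It gives convexity and $J'_\pm$ directly, where the paper uses partial minimization of a jointly convex function and dominated convergence. You also try to justify the restriction to $[0,\VaR_\alpha(S)]$, which the paper only asserts. One step, however, is wrong as written.

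The telescoping check you flagged fails for $q<0$. Since $T_{n+1}=0$, when $\beta_n<K$ and $q<0$ the portion $-q$ of the excess $S-q$ cannot be ceded and is retained at rate $K$. So for all $q\in\Real$
\[
J(q)=\lambda q+\sum_{i=1}^{n}(c_i-c_{i-1})\,\mathbb{E}\left[(T_i-q)_+\right]+(K-c_n)(-q)_+ .
\]
On $[0,\infty)$ the extra term vanishes, and its right derivative at $0$ is zero. Hence your convexity and derivative computations on $[0,\VaR_\alpha(S)]$ stand. There is also no sign issue: $(K-\beta_i)_+-(K-\beta_{i-1})_+=-(c_i-c_{i-1})$, so your one-sided derivatives are literally the stated $J'_\pm$.

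Your $q<0$ bullet, however, is false. The slope there is $\lambda-K<0$, not $\lambda-c_n$. Your slope would be positive when $\beta_n<\lambda$, forcing $J\to-\infty$, which is impossible since $J\ge 0$. The sentence ``never improve unless that slope is negative'' also has the direction reversed.

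Use the fallback you sketched instead; it settles both endpoints at once. For $\R^*\in P(\lambda)$, $q^*=\VaR_\alpha(Z^*)$ attains the Rockafellar--Uryasev minimum and lies in $[0,\VaR_\alpha(S)]$ because $0\le Z^*\le S$. So $(\R^*,q^*)$ is a joint minimizer, and $\R^*$ has the stated form with an admissible $q$. Since $P(\lambda)\neq\emptyset$ by Theorem~\ref{dual}, this also places a global minimizer of $J$ in the interval. Hence any $q$ satisfying \eqref{con:q} is a global minimizer, which gives the converse inclusion.

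Your bound $J'_+\ge 0$ beyond $\VaR_\alpha(S)$ is correct, though the coefficients sum to $c_n\le K$, not $K$. On its own, it does not show that an $\R^*$ generated by a minimizer $q^*>\VaR_\alpha(S)$ is also generated by some $q$ in the interval; the $\VaR_\alpha(Z^*)$ argument does.
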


\begin{Remark}
    If the laws of the tail partial sums of $X$ are atomless, which is true if the law $\mu$ of $\X$ is {absolutely continuous with respect to the Lesbesgue measure}, $P(\lambda)$ will be a singleton for all $\lambda \in [0,\infty)\backslash\{(1-\alpha)\beta_i: 1\leq i\leq n\}$. 
\end{Remark}

\begin{Remark}
Neither $C(c)$ nor $P(\lambda)$ may be a singleton; in the following single risk toy example, we show that $C(c)$ is not a singleton. Let $(\Omega,\calF,\mathbb{P})=([0,1],\calB([0,1]),\mu)$, where $\calB([0,1])$ stands for the Borel sets on $[0,1]$, and $\mu$ is the Lebesgue measure. Let $X\sim U[0,1]$ be the identity map. We set $\alpha=0.9$, and $c=0.92$. Note that any feasible $R$ satisfies,  
\begin{multline*}
    0.92 \geq \CVaR{0.9}{X-R} \geq \E{X-R \big|X \geq 0.9}
    =0.95-\frac{\E{R \mathbf{1} (X \geq 0.9)}}{0.1} \geq 0.95-\frac{\E{R}}{0.1},
\end{multline*}
implying $\E{R}\geq 0.3\%$. It is easy to see that for any $R \leq (X-0.9)_+$ with $\E{R}=0.3\%$ all the inequalities above reduce to an equality, and hence any such $\R$ is optimal.  

\end{Remark}
\begin{proof}
Using the variational representation for CVaR (see \citet{rockafellar2000optimization}), \eqref{Reins-P} is equivalent to minimizing over both $R$ and the quantile variable $q$:

\begin{multline}
\label{cvar_variational}
    \min_{\R} \quad \sum_{i=1}^n \beta_i \E{R_i}+\lambda \text{CVaR}_\alpha \left( \sum_{i=1}^n (X_i - R_i) \right)\\
    =\min_{\R, q} \quad \sum_{i=1}^n \beta_i \mathbb{E}[R_i] + \lambda \left( q + \frac{1}{1-\alpha} \mathbb{E}\left[ \left( \sum_{i=1}^n (X_i - R_i) - q \right)_+ \right] \right)
\end{multline}

For a fixed $q$, we seek to minimize the term inside the expectation for every state $\omega$ (pointwise optimization):

\[
\min_{0 \le R_i(\omega) \le X_i(\omega)} \quad \sum_{i=1}^n \beta_i R_i(\omega) + \frac{\lambda}{1-\alpha} \left( S(\omega) - \sum_{i=1}^n R_i(\omega) - q \right)_+ + \lambda q
\]

Recall $K=\frac{\lambda}{1-\alpha}$. If $K \ne \beta_i$, the solution is 
\[
R_i^*=\mathbf{I}(\beta_i<K) \left(\left(S-\sum_{j=1}^{i-1}X_j-q \right)_+ \wedge X_i\right),
\]
non-increasing in $q$; otherwise, $R_i^*$ can be any non-negative random variable bounded above by 
\[
\left(S-\sum_{j=1}^{i-1}X_j-q \right)_+ \wedge X_i.
\]
The question now is how to determine the optimal value for $q$, say $q^*$. 
For 
\[
J(q):=\min_{\R} \quad \sum_{i=1}^n \beta_i \mathbb{E}[R_i] + \lambda \left( q + \frac{1}{1-\alpha} \mathbb{E}\left[ \left( \sum_{i=1}^n (X_i - R_i) - q \right)_+ \right] \right),
\]
as a partial minimization of a jointly convex function, it is convex in $q$ (see section 3.2.6 in \citet{boyd2004convex}). By the dominated convergence theorem, the right derivative in $q$ is 
\begin{align*}
     J_+'(q)&=\sum_{i=1}^n -\beta_i \mathbf{I}(\beta_i<K) \E{\mathbf{1}\left(\sum_{j=i+1}^n X_j \le q<\sum_{j=i}^n X_j \right)} + \lambda\\
     &\qquad -K   \E{ \sum_{i=1}^n \mathbf{I}\left(\beta_i \geq K, {\sum_{j=i+1}^n X_j \le q<\sum_{j=i}^n X_j}\right) }\\
     &=\sum_{i=1}^n -\beta_i \mathbf{I}(\beta_i<K) \P{\sum_{j=i+1}^n X_j \le q<\sum_{j=i}^n X_j} + \lambda \\
     & \qquad -K \sum_{i=1}^n  \mathbf{I}(\beta_i \geq K) \P{\sum_{j=i+1}^n X_j \le q<\sum_{j=i}^n X_j}\\
     &=\sum_{i=1}^n (K-\beta_i) \mathbf{I}(\beta_i<K) \P{\sum_{j=i+1}^n X_j \le q<\sum_{j=i}^n X_j} + \lambda-K\P{q < S}\\
     &= \sum_{i=1}^n \left((K-\beta_i)_+-(K-\beta_{i-1})_+ \right) \P{q<\sum_{j=i}^n X_j} + \lambda,
\end{align*}
where $\beta_0:=0$, and $\sum_{j=n+1}^nX_j:=0$. Similarly, the left derivative in q is
\[
J_-'(q)=\sum_{i=1}^n \left((K-\beta_i)_+-(K-\beta_{i-1})_+ \right) \P{q \leq \sum_{j=i}^n X_j} + \lambda
\]
Since $f(\cdot)$ is convex, and it is easy to check that the optimal value for $q$, say $q^*$,  belongs to $[0,\text{VaR}_\alpha(S)]$, we have $q^*$ satisfying the constraints in \eqref{con:q}. 
Note that since the function is convex, a bisection algorithm can be used to find $q^*$ .

\end{proof}

Below, we provide an illustrative example with the same risk distributions as Example \ref{Ex1} of Section 3.

\begin{Example} \label{Ex2}
Consider two independent random variables $X_1$ and $X_2$ with $X_1$ following a  $\hbox{Gamma}(1/2, 1/2)$ distribution and $X_2$  a (shifted) Pareto distribution with p.d.f. given by
\[
f_{X_2}(x) = 324\,(x + 3)^{-5}, \quad x \geq 0,
\]
with a mean equal to 1 for both distributions. Let $\alpha=0.9$, and $\beta_1=0.1$ and $\beta_2 = 0.25$. Consider an insurer that seeks to bound the retained CVaR by $c = 5$. 
We simulate $10$ million samples of $(X_1,X_2)$ and estimate $\VaR_{0.9}(X_1+X_2)$ and $\CVaR{0.9}{X_1+X_2}$ to be $4.3867$ and $6.5315$, respectively. We find that $\lambda^*=0.0106$, and $q^*=4.3079$. If $c=6$ instead, then $\lambda^*=0.01$, corresponding to the discontinuity of $\CVaR{\alpha}{Z^*}$, as shown in the top left plot of Figure \ref{fig:cvar}. Solving for $q^*$ using \eqref{con:q} we see that $q^*=4.3867$, equal to  $\VaR_{0.9} (X_1+X_2)$. The solution set is
\[
C(c)=\{(R_1^*,0): 0 \leq R_1^* \leq \left(X_1+X_2-4.3867 \right)_+ \wedge X_1,\CVaR{\alpha}{X_1+X_2-R_1^*}=6\}.
\] 
Figure \ref{fig:deriv_q} shows the process of finding $q^*$ under $c=5$.
Again, all analyses were conducted in the R software environment (version 4.3.2; see \citet{R}), with the workhorse function being {\tt uniroot}.
\end{Example}

\begin{Remark}
Figure \ref{fig:cvar} depicts interesting phenomena. With $Z^*=\sum X_i-\sum R_i^*$, that $q^*=\VaR_{\alpha}(Z^*)$ solving \eqref{con:q} is continuous with respect to $\lambda$  for absolutely continuous $\mu$ is shown in the plot to the right. In contrast, $\CVaR{\alpha}{Z^*}$ as a function of $\lambda$ has discontinuities at ${\beta_1}/{(1-\alpha)}$ and ${\beta_2}/{(1-\alpha)}$, as can be seen in the expression of Theorem \ref{P(lambda)_for_cvar} and shown in the plot to the left. 
\end{Remark}

\begin{figure}[htbp]
    \centering
    \input{obj_deriv_q}
    \caption{The plot shows the zero of $J_+'(q)$ corresponding to $\lambda^*=1.06\%$ under $c=5$}
    \label{fig:deriv_q}
\end{figure}
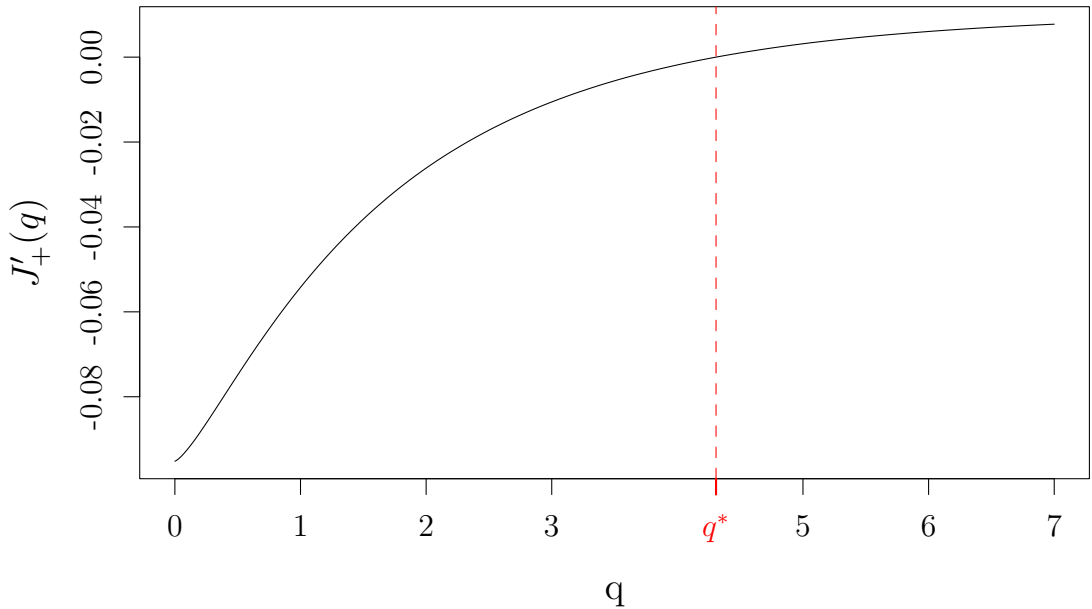

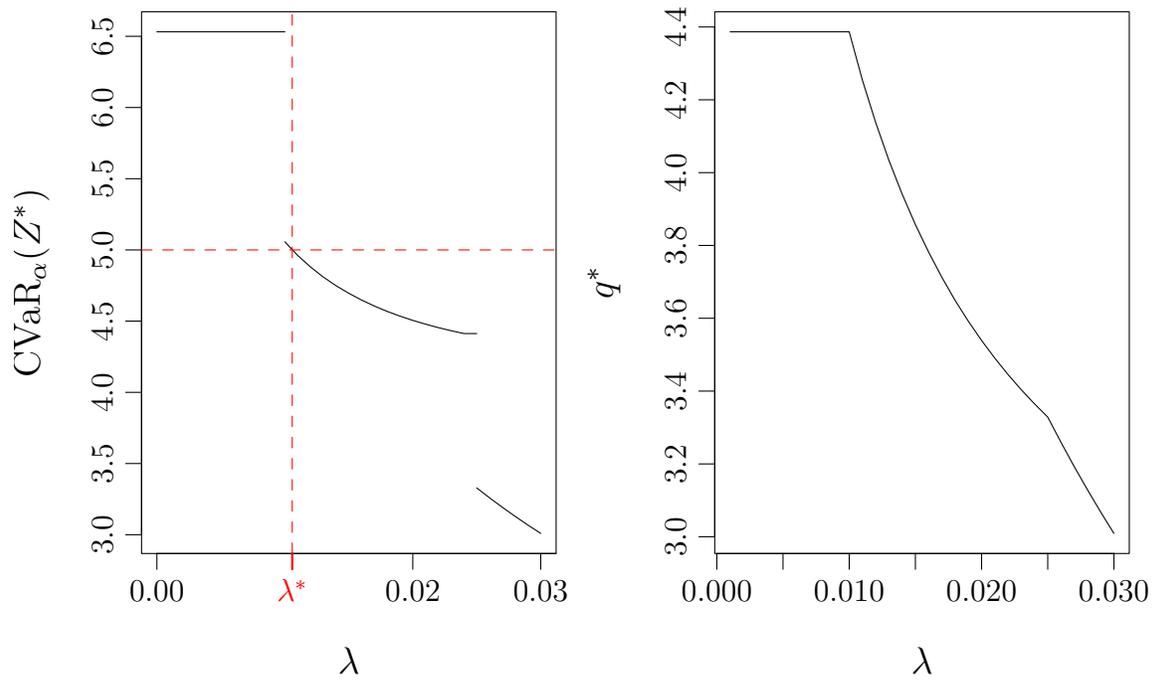
\begin{figure}[htbp]
    \centering
    \input{cvar}
    \caption{Left plot: $\lambda^*$ corresponding to the constraint $\CVaR{\alpha}{Z^*}=c$
    on the optimal retained risk. The red dashed lines correspond to the $c=5$ case. We notice that discontinuities at $0.01$ and $0.025$ corresponding to ${\beta_1}/{(1-\alpha)}$ and $\beta_2/(1-\alpha)$, respectively. Right plot:  Solution of \eqref{con:q}, $q^*$, is a continuous function of $\lambda$.}
    \label{fig:cvar}
\end{figure}

\section{Conclusion}
We provide methods to solve a class of constrained optimal reinsurance problems via convex duality. We provide specific solutions for the risk measures variance and CVaR, both convex and lower semicontinuous on a subspace of $L^1$. Our approach preserves the convexity of the constraint set, since we are working with random variables, whereas in Example 6.1 of \citet{acciaio2025optimal} the constraint set is not convex as a set of measures. The proof of Example 6.1  of \citet{acciaio2025optimal} relies on a descent direction $\eta$ of the constraint functional at the optimal insurance contract. In particular, the descent condition requires a
\[
\eta \in \calP\left(D^n\right),\, \hbox{with }  D=\{(x_1,x_2)\in \Real^2:0 \leq x_1 \leq x_2 < \infty\},
\]
satisfying $ \pi_{e\#}\eta=\mu$ where $\pi_e: \Real^{2n} \rightarrow \Real^n$ is the projection onto the even coordinates, such that 
\[
\calG(\eta^*)+d\calG(\eta^*;\eta-\eta^*)<0,
\]
where 
\[
\calG(\eta^*)=\int \left(\sum_{i=1}^n (x_{2i}-x_{2i-1})\right)^2 d\eta^*- \left(\int \left(\sum_{i=1}^n (x_{2i}-x_{2i-1})\right) d\eta^* \right)^2,
\]
the variance functional on the retained risk, $d\calG $ denote the directional derivative and $\eta^*$ denote the optimal reinsurance policy. After simplification,  we have 
\[
\calG(\eta^*)+d\calG(\eta^*;\eta-\eta^*)=\sigma^2-c+\int\left(\sum_{i=1}^n (x_{2i}-x_{2i-1})\right)^2-2\sigma \left(\sum_{i=1}^n (x_{2i}-x_{2i-1})\right) d\eta,
\] 
where $\sigma=\EP{\eta^*}{\sum_{i=1}^n X_{2i}-X_{2i-1}}$. Note that under full reinsurance, the above equals $\sigma^2-c$, which may not be negative; hence, the full reinsurance may not be a descent direction as claimed in \citet{acciaio2025optimal}. The issue stems from the non-convexity of the $\calG$. Instead of the full reinsurance, a descent direction is one minimizing the above,  which is any probability measure supported on a set $S$ satisfying
\begin{eqnarray*}
&S\cap\left\{\sum_{i=1}^n x_{2i}>\sigma\right\}\!\!\!&\subseteq \left\{ \sum_{i=1}^n x_{2i-1}=\sum_{i=1}^n x_{2i}-\sigma\right\}, \\
\hbox{and } &S\cap\left\{\sum_{i=1}^n x_{2i}\leq\sigma\right\}\!\!\!&\subseteq \left\{ x_{2i-1}=0,\, i=1,2,\cdots,n\right\}.
\end{eqnarray*}

Finally, we note that for some non-convex risk measures \eqref{Reins-C} can still be explicitly solved. One such is $\rho(\cdot)=\VaR_\alpha ({\cdot})$, the value at risk, which is considered in  Example 6.3 of \citet{acciaio2025optimal}, under the additional assumption of the absolute continuity of the law $\mu$ of $\X$. The rudimentary solution we provide below does not require the absolute continuity assumption. With $Z=\sum X_i-\sum R_i$ denoting the retained risk, the constrained version of the resulting optimization problem 
\begin{equation}\label{eq:reins_VaR}
\begin{aligned}
    C(c)=\argmin_{\R}\;
&\sum_{i=1}^n \beta_i\,\EY[R_i] \\
\text{with } & 0\le R_i\le X_i, \; \forall i\\
&\VaR_{\alpha}(Z) \leq c
\end{aligned}
\end{equation}
where $\alpha\in(0,1)$, and $0<\beta_1<\beta_2<\cdots<\beta_n$ are given parameters, and $\boldsymbol\beta=(\beta_1,\ldots,\beta_n)$. We show below that $C(c)$ is nonempty. Towards doing so, we define the map $\phi:\Real^n \to \Real^n$,
\begin{equation*}
    \phi(\mathbf{x})= \left(\left(\sum_{j=1}^{n}x_j-c \right)_+ \wedge x_1,\;\ldots, \left(\sum_{j=n-1}^{n}x_j-c \right)_+ \wedge x_{n-1}, \; \left(x_n-c \right)_+\right)'.
\end{equation*}
For any feasible $\R$, note that $\R'$ defined as 
\[
\R':=\begin{cases}
\mathbf{0}, & \sum_{i=1}^n R_i<\sum_{i=1}^n X_i - c;\\
\phi(\mathbf{X}),& \hbox{otherwise}; 
\end{cases},
\]
is a feasible reinsurance payoff with a lower objective value. Let $q=\VaR_\alpha(\boldsymbol\beta \,\phi(\mathbf{X}))$. It is relatively straightforward now to see that if $\Pr{\boldsymbol\beta \,\phi(\mathbf{X})<q}=\alpha$, $C(c)$ is a singleton containing 
\[
\R^*=\begin{cases}
\mathbf{0}, &\boldsymbol\beta \,\phi(\mathbf{X}) \geq q\\
    \phi(\mathbf{X}), &\boldsymbol\beta \,\phi(\mathbf{X})<q
\end{cases},
\] 
and that If $\Pr{\boldsymbol\beta \,\phi(\mathbf{X})<q}<\alpha$ and the probability space is atomless, $C(c)$, is no longer a singleton, contains $\R^*$ satisfying  
\[
\R^*=\begin{cases}
    \mathbf{0}, &\boldsymbol\beta \,\phi(\mathbf{X}) > q \\
    \phi(\X), &\boldsymbol\beta \,\phi(\mathbf{X})<q \\
    B\cdot \phi(\X), &\boldsymbol\beta \,\phi(\mathbf{X})=q
\end{cases},
\]
where $B\in\{0,1\}$ is a random variable satisfying
\[
\int_{\boldsymbol\beta \,\phi(\mathbf{X})=q} B \,{\rm d}\mathbb{P}=\alpha-\Pr{\boldsymbol\beta \,\phi(\mathbf{X})<q}. 
\]
Existence of such a $B$ is guaranteed by the atomless nature of the probability space.

\bibliographystyle{plainnat}
\bibliography{ref}

\section{Appendix}

\begin{lemma}
\label{var_convex_semi}
The variance operator on $\calE$ is convex and weakly lower semi-continuous.
\end{lemma}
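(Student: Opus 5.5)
Convexity and weak lower semi-continuity are handled separately; $\calE$ is viewed as a convex subset of $L^1$ whose elements satisfy $0\le Z\le S:=\sum_{i=1}^n X_i$.

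\textbf{Convexity.} For $Z\in\calE$ we have $Z\in L^2$ iff $S$ is square integrable; otherwise the variance may be $+\infty$, with the convention $\Var{Z}=\E{(Z-\E{Z})^2}\in[0,\infty]$. For $Z_1,Z_2\in\calE$ and $t\in[0,1]$, put $W_j=Z_j-\E{Z_j}$. Linearity of expectation gives $tZ_1+(1-t)Z_2-\E{tZ_1+(1-t)Z_2}=tW_1+(1-t)W_2$. Convexity of $x\mapsto x^2$, applied pointwise and then integrated, yields
\[
\Var{tZ_1+(1-t)Z_2}\le t\Var{Z_1}+(1-t)\Var{Z_2}.
\]
This holds also when either right-hand term is infinite.

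\textbf{Weak lower semi-continuity.} Let $Z_k\xrightarrow[]{d}Z$ in $\calE$. Since every element of $\calE$ is dominated by $S\in L^1$, the laws in $\calE$ are uniformly integrable. Then Theorem 3.5 of \citet{billingsley1999convergence} gives $\E{Z_k}\to\E{Z}$. For the second moment, write $x^2=\sup_M (x^2\wedge M)$, a supremum of bounded continuous functions. For each $M$, weak convergence gives
\[
\liminf_k \E{Z_k^2}\ge \lim_k \E{Z_k^2\wedge M}=\E{Z^2\wedge M}.
\]
Letting $M\to\infty$ and using monotone convergence, $\liminf_k\E{Z_k^2}\ge\E{Z^2}$. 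Combining this with convergence of the means,
\[
\liminf_k\Var{Z_k}=\liminf_k\E{Z_k^2}-\lim_k(\E{Z_k})^2\ge \E{Z^2}-(\E{Z})^2=\Var{Z}.
\]
The means are finite, so no $\infty-\infty$ issue arises.

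\textbf{Main obstacle.} The delicate point is the first moment, because $x\mapsto x$ is not bounded. Without the domination $0\le Z\le S$, the means could escape to infinity and only Fatou-type inequalities would remain. That would break the subtraction step, since $-(\E{\cdot})^2$ is not lower semi-continuous. Uniform integrability of $\calE$ is exactly what removes this difficulty, so I will state it explicitly, noting that $\{Z:0\le Z\le S\}$ is uniformly integrable because $\E{Z\mathbf{1}(Z>a)}\le\E{S\mathbf{1}(S>a)}\to0$.
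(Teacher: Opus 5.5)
Your proposal is correct and follows the same route as the paper. Convexity comes from pointwise convexity of $x\mapsto x^2$ applied to the centered variables, and weak lower semi-continuity combines two facts: convergence of the means, which follows from uniform integrability of $\calE$ and the Billingsley theorem the paper cites, and lower semi-continuity of $\E{Z^2}$ for the nonnegative continuous integrand $t^2$. Your truncation $x^2=\sup_M(x^2\wedge M)$ and your treatment of possibly infinite variance just make explicit steps the paper states in one line.
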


\begin{proof}
For any $Z,Y \in \calE \subset L^1(\Omega,\calF,\mathbb{P})$,
\[
\Var{\frac{Z+Y}{2}} \leq \frac{\Var{Z}+\Var{Y}}{2},
\] 
with equality only when $\mathbb{P}$ almost surely $Z=Y+m$ for a constant $m$.
We already proved the expectation operator $\E{\cdot}$ is weakly continuous on $\calE$, as a result of the uniform integrability of $\calE$. Since $\psi(t)=t^2$ is continuous and non-negative, $g(Z)=\E{Z^2}$ is weakly lower semi-continuous. Thus, $\Var{Z}=\E{Z^2}-\left(\E{Z}\right)^2$ implies that the variance operator is weakly lower semi-continuous.
\end{proof}

    \begin{lemma}
    \label{bv}
        Let $\calR$ be a convex set in a real vector space, and $\phi,\psi:\calR \to \Real$ be two convex functions. If there exists a point $x \in \calR$ satisfying $\psi(x)<0$, which is called Slater's condition, we have strong duality:  \[p^*:=\inf_{\psi(\R) \leq 0, \R \in \calR}\phi(\R)=\sup_{\lambda \geq 0}\inf_{\R \in \calR}\phi(\R)+\lambda \cdot \psi(\R)=:d^*.\] Moreover, the optimum $\lambda^*$ attaining $d^*$ exists.
    \end{lemma}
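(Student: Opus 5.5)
We prove Lemma \ref{bv} by the standard separating-hyperplane argument for convex programs, which needs no topology on the underlying vector space. It only uses finite-dimensional separation in $\Real^2$. We may assume $p^*>-\infty$, since otherwise weak duality $d^*\le p^*$ gives $d^*=-\infty=p^*$ and any $\lambda^*\ge 0$ attains it. Weak duality holds in general: for $\lambda\ge0$ and any feasible $\R$ we have $\phi(\R)+\lambda\psi(\R)\le\phi(\R)$, so $\inf_{\calR}(\phi+\lambda\psi)\le p^*$, and hence $d^*\le p^*$. It therefore remains to produce a $\lambda^*\ge0$ with $\inf_{\calR}(\phi+\lambda^*\psi)\ge p^*$. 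Such a $\lambda^*$ gives $d^*=p^*$ and is attained.

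Consider the set
\[
\mathcal{A}:=\{(u,t)\in\Real^2:\ \exists\,\R\in\calR \hbox{ with } \psi(\R)\le u,\ \phi(\R)\le t\}.
\]
This is a convex subset of $\Real^2$, because $\calR$ is convex, $\phi$ and $\psi$ are convex, and we take epigraph-type upper sets. The point $(0,p^*)$ is not an interior point of $\mathcal{A}$. Indeed, if $(0,p^*-\varepsilon)\in\mathcal{A}$ for some $\varepsilon>0$, there would be a feasible $\R$ with $\phi(\R)<p^*$, which is impossible. Equivalently, $\mathcal{A}$ is disjoint from the convex set $\mathcal{B}:=\{(0,t):t<p^*\}$. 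By the separating hyperplane theorem in $\Real^2$, there exists $(\mu,\nu)\ne(0,0)$ with
\[
\mu u+\nu t\ \ge\ \nu p^* \quad \hbox{for all } (u,t)\in\mathcal{A}.
\]
Because $\mathcal{A}$ is upward closed in both coordinates, we get $\mu\ge0$ and $\nu\ge0$.

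The key step, and the only delicate one, is to rule out $\nu=0$; this is where Slater's condition enters. Suppose $\nu=0$. Then $\mu>0$, and the inequality reads $\mu u\ge0$ on $\mathcal{A}$. Taking the Slater point $x$ and $u=\psi(x)<0$ gives $\mu\psi(x)<0$, a contradiction. Hence $\nu>0$, and we set $\lambda^*:=\mu/\nu\ge0$. Applying the inequality to $(\psi(\R),\phi(\R))\in\mathcal{A}$ for each $\R\in\calR$ gives $\phi(\R)+\lambda^*\psi(\R)\ge p^*$. Taking the infimum over $\calR$ yields the bound needed above, which completes the proof.

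The only technical care is the separation step. $\mathcal{A}$ might not be closed, but separation of two disjoint convex sets in $\Real^2$ (one of them, $\mathcal{B}$, is relatively open) does not require closedness, so the finite-dimensional theorem applies directly.
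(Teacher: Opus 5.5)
Your proof is correct and follows the paper's argument essentially step for step. You use the same upward-closed set $\mathcal{A}$ and open ray $\mathcal{B}=\{(0,t):t<p^*\}$ in $\Real^2$, the same finite-dimensional separation, the same sign argument from upward closedness, the same use of the Slater point to rule out $\nu=0$, and the same normalization $\lambda^*=\mu/\nu$; your explicit handling of $p^*=-\infty$, where $\mathcal{B}$ would be empty and the paper's separation step breaks down, is a small but worthwhile addition that the paper leaves implicit.
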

    
    \begin{proof}
    The following proof is an adaptation of one from \citet{boyd2004convex} to our setting.
        For every $\lambda \geq 0$, $$\inf_{\psi(\R) \leq 0, \R \in \calR}\phi(\R) \geq \inf_{\psi(\R) \leq 0, \R \in \calR}\phi(\R)+\lambda \cdot \psi(\R) \geq \inf_{\R \in \calR}\phi(\R)+\lambda \cdot \psi(\R).$$
        Taking supremum on $\lambda$, we have the weak duality $p^* \geq d^*$.

        To show the strong duality, we define $\calA=\{(u,t)\in \Real^2: \exists \R \in \calR, u \geq \psi(\R), t \geq \phi(\R)\}$ and $\calB=\{(0,s)\in \Real^2: s<p^*\}$ to be two convex sets. We have $$p^*=\inf_{(0,t)\in \calA}t, \quad d^*=\sup_{\lambda \geq 0}\inf_{(u,t) \in \calA} (\lambda,1)^T(u,t).$$  It is easy to see that
        $\calA \cap \calB=\emptyset$. By the separating hyperplane theorem, 
there exists \((\tilde\lambda,\mu)\neq 0\) and \(\alpha\) such that
\begin{align}
  (u,t)\in \calA
  &\;\Longrightarrow\;
  \tilde\lambda u  + \mu\,t \;\ge\;\alpha,
  \label{5.39}\\
  (u,t)\in B
  &\;\Longrightarrow\;
  \tilde\lambda u + \mu\,t \;\le\;\alpha.
  \label{5.40}
\end{align}
From \eqref{5.39} we conclude that \(\tilde\lambda \geq 0\) and \(\mu\ge0\).  (Otherwise \(\tilde\lambda u+\mu\,t\) is unbounded below over \(\calA\), contradicting \eqref{5.39}.)  The condition \eqref{5.40} simply means \(\mu\,t\leq \alpha\) for all \(t<p^*\), and hence \(\mu\,p^*\le\alpha\).  Together with \eqref{5.39} we conclude that for any \(\R\in \calR\),
\begin{equation}\label{5.41}
  \tilde\lambda\,\psi(\R)\;+\;\mu\,\phi(\R)
  \;\ge\;\alpha\;\ge\;\mu\,p^*.
\end{equation}
If \(\mu=0\), we take $\R$ such that $\psi(\R)<0$ to conclude $\tilde \lambda=0$, which is a contradiction. Therefore, \(\mu>0\), and we can divide \eqref{5.41} by \(\mu\) to obtain
\[
   \lambda^*\,\psi(\R)\,+\,\phi(\R)
  \;\ge\;p^*
  \quad\text{for all } \R\in \calR,
\]
where \(\lambda^*=\tilde\lambda/\mu\).
Combined with weak duality, we have \(p^*=d^*\), and the dual optimum $\lambda^*$ is attained.
\end{proof}

\end{document}

%% file: math.tex
\def \R{\mathbf{R}}

\def\1{\mathbf{1}}

\def\X{\mathbf{X}}

\def\VaR{{\rm VaR}}
\def\CVaR#1#2{{\rm CVaR}_{#1}\left(#2\right)}

\DeclareMathOperator*{\argmin}{argmin}

\def\E#1{\mathbb{E}{\left(#1\right)}}
\def\EP#1#2{\mathbb{E}_{#1}{\left(#2\right)}}
\def\Var#1{{\rm Var}\left(#1\right)}

\def\Pr#1{{\rm Pr}\left(#1\right)}

\makeatletter
\DeclareRobustCommand{\cev}[1]{%
  {\mathpalette\do@cev{#1}}%
}
\newcommand{\do@cev}[2]{%
  \vbox{\offinterlineskip
    \sbox\z@{$\m@th#1 x$}%
    \ialign{##\cr
      \hidewidth\reflectbox{$\m@th#1\vec{}\mkern4mu$}\hidewidth\cr
      \noalign{\kern-\ht\z@}
      $\m@th#1#2$\cr
    }%
  }%
}
\makeatother

\usepackage{stackengine}
\stackMath
\setstackgap{L}{.4ex}

\def\calR{{\mathcal{R}}}
\def\calL{{\mathcal{L}}}
\def\calF{{\mathcal{F}}}

\def\calA{{\mathcal{A}}}

\def\calC{{\mathcal{C}}}

\def\calB{{\mathcal{B}}}
\def\calG{{\mathcal{G}}}
\def\calM{{\mathcal{M}}}

\def\calP{{\mathcal{P}}}

\def\calE{{\mathcal{E}}}

\def\Real{\mathbb{R}}

\DeclareMathOperator{\EY}{\mathbb{E}}

\def\E#1{\mathbb{E}{\left(#1\right)}}
\def\EP#1#2{\mathbb{E}_{#1}{\left(#2\right)}}
\def\Var#1{{\rm Var}\left(#1\right)}

\def\Pr#1{{\rm Pr}\left(#1\right)}
\def\P#1{{\mathbb{P}}\left(#1\right)}

%% file: obj_deriv_q.tex
\begin{tikzpicture}[x=1pt,y=1pt]
\definecolor{fillColor}{RGB}{255,255,255}
\path[use as bounding box,fill=fillColor,fill opacity=0.00] (0,0) rectangle (433.62,289.08);
\begin{scope}
\path[clip] ( 49.20, 61.20) rectangle (408.42,239.88);
\definecolor{drawColor}{RGB}{0,0,0}

\path[draw=drawColor,line width= 0.4pt,line join=round,line cap=round] ( 62.50, 67.82) --
	( 62.98, 68.05) --
	( 63.45, 68.38) --
	( 63.93, 68.76) --
	( 64.41, 69.18) --
	( 64.88, 69.64) --
	( 65.36, 70.13) --
	( 65.83, 70.65) --
	( 66.31, 71.19) --
	( 66.78, 71.75) --
	( 67.26, 72.32) --
	( 67.73, 72.91) --
	( 68.21, 73.51) --
	( 68.68, 74.12) --
	( 69.16, 74.75) --
	( 69.63, 75.39) --
	( 70.11, 76.04) --
	( 70.58, 76.70) --
	( 71.06, 77.36) --
	( 71.53, 78.04) --
	( 72.01, 78.73) --
	( 72.48, 79.42) --
	( 72.96, 80.12) --
	( 73.43, 80.82) --
	( 73.91, 81.53) --
	( 74.38, 82.23) --
	( 74.86, 82.94) --
	( 75.33, 83.66) --
	( 75.81, 84.38) --
	( 76.28, 85.11) --
	( 76.76, 85.83) --
	( 77.23, 86.56) --
	( 77.71, 87.29) --
	( 78.18, 88.01) --
	( 78.66, 88.75) --
	( 79.13, 89.48) --
	( 79.61, 90.21) --
	( 80.09, 90.95) --
	( 80.56, 91.68) --
	( 81.04, 92.42) --
	( 81.51, 93.16) --
	( 81.99, 93.89) --
	( 82.46, 94.62) --
	( 82.94, 95.35) --
	( 83.41, 96.09) --
	( 83.89, 96.82) --
	( 84.36, 97.55) --
	( 84.84, 98.28) --
	( 85.31, 99.01) --
	( 85.79, 99.74) --
	( 86.26,100.46) --
	( 86.74,101.18) --
	( 87.21,101.91) --
	( 87.69,102.63) --
	( 88.16,103.35) --
	( 88.64,104.07) --
	( 89.11,104.79) --
	( 89.59,105.50) --
	( 90.06,106.21) --
	( 90.54,106.92) --
	( 91.01,107.63) --
	( 91.49,108.33) --
	( 91.96,109.02) --
	( 92.44,109.73) --
	( 92.91,110.43) --
	( 93.39,111.12) --
	( 93.86,111.81) --
	( 94.34,112.50) --
	( 94.82,113.18) --
	( 95.29,113.86) --
	( 95.77,114.54) --
	( 96.24,115.22) --
	( 96.72,115.89) --
	( 97.19,116.57) --
	( 97.67,117.24) --
	( 98.14,117.91) --
	( 98.62,118.57) --
	( 99.09,119.24) --
	( 99.57,119.90) --
	(100.04,120.56) --
	(100.52,121.21) --
	(100.99,121.87) --
	(101.47,122.52) --
	(101.94,123.16) --
	(102.42,123.80) --
	(102.89,124.44) --
	(103.37,125.07) --
	(103.84,125.70) --
	(104.32,126.33) --
	(104.79,126.96) --
	(105.27,127.58) --
	(105.74,128.20) --
	(106.22,128.82) --
	(106.69,129.43) --
	(107.17,130.04) --
	(107.64,130.64) --
	(108.12,131.25) --
	(108.59,131.85) --
	(109.07,132.45) --
	(109.55,133.05) --
	(110.02,133.64) --
	(110.50,134.23) --
	(110.97,134.82) --
	(111.45,135.40) --
	(111.92,135.98) --
	(112.40,136.56) --
	(112.87,137.13) --
	(113.35,137.70) --
	(113.82,138.27) --
	(114.30,138.83) --
	(114.77,139.39) --
	(115.25,139.96) --
	(115.72,140.51) --
	(116.20,141.06) --
	(116.67,141.61) --
	(117.15,142.15) --
	(117.62,142.70) --
	(118.10,143.24) --
	(118.57,143.78) --
	(119.05,144.31) --
	(119.52,144.85) --
	(120.00,145.37) --
	(120.47,145.90) --
	(120.95,146.42) --
	(121.42,146.94) --
	(121.90,147.46) --
	(122.37,147.98) --
	(122.85,148.49) --
	(123.32,149.00) --
	(123.80,149.50) --
	(124.28,150.00) --
	(124.75,150.50) --
	(125.23,150.99) --
	(125.70,151.49) --
	(126.18,151.98) --
	(126.65,152.47) --
	(127.13,152.96) --
	(127.60,153.44) --
	(128.08,153.92) --
	(128.55,154.40) --
	(129.03,154.87) --
	(129.50,155.34) --
	(129.98,155.81) --
	(130.45,156.27) --
	(130.93,156.73) --
	(131.40,157.20) --
	(131.88,157.65) --
	(132.35,158.11) --
	(132.83,158.57) --
	(133.30,159.02) --
	(133.78,159.46) --
	(134.25,159.90) --
	(134.73,160.35) --
	(135.20,160.79) --
	(135.68,161.23) --
	(136.15,161.66) --
	(136.63,162.09) --
	(137.10,162.52) --
	(137.58,162.94) --
	(138.05,163.36) --
	(138.53,163.78) --
	(139.00,164.20) --
	(139.48,164.62) --
	(139.96,165.03) --
	(140.43,165.44) --
	(140.91,165.85) --
	(141.38,166.26) --
	(141.86,166.67) --
	(142.33,167.07) --
	(142.81,167.47) --
	(143.28,167.87) --
	(143.76,168.27) --
	(144.23,168.66) --
	(144.71,169.05) --
	(145.18,169.44) --
	(145.66,169.83) --
	(146.13,170.21) --
	(146.61,170.60) --
	(147.08,170.98) --
	(147.56,171.35) --
	(148.03,171.73) --
	(148.51,172.10) --
	(148.98,172.46) --
	(149.46,172.84) --
	(149.93,173.20) --
	(150.41,173.56) --
	(150.88,173.92) --
	(151.36,174.29) --
	(151.83,174.64) --
	(152.31,175.00) --
	(152.78,175.36) --
	(153.26,175.71) --
	(153.73,176.06) --
	(154.21,176.40) --
	(154.69,176.74) --
	(155.16,177.09) --
	(155.64,177.43) --
	(156.11,177.77) --
	(156.59,178.11) --
	(157.06,178.45) --
	(157.54,178.78) --
	(158.01,179.11) --
	(158.49,179.44) --
	(158.96,179.76) --
	(159.44,180.08) --
	(159.91,180.41) --
	(160.39,180.73) --
	(160.86,181.05) --
	(161.34,181.36) --
	(161.81,181.68) --
	(162.29,181.99) --
	(162.76,182.30) --
	(163.24,182.61) --
	(163.71,182.92) --
	(164.19,183.23) --
	(164.66,183.53) --
	(165.14,183.83) --
	(165.61,184.13) --
	(166.09,184.43) --
	(166.56,184.72) --
	(167.04,185.02) --
	(167.51,185.31) --
	(167.99,185.60) --
	(168.46,185.89) --
	(168.94,186.18) --
	(169.42,186.46) --
	(169.89,186.75) --
	(170.37,187.04) --
	(170.84,187.32) --
	(171.32,187.60) --
	(171.79,187.88) --
	(172.27,188.16) --
	(172.74,188.44) --
	(173.22,188.71) --
	(173.69,188.98) --
	(174.17,189.25) --
	(174.64,189.52) --
	(175.12,189.79) --
	(175.59,190.06) --
	(176.07,190.32) --
	(176.54,190.58) --
	(177.02,190.84) --
	(177.49,191.11) --
	(177.97,191.36) --
	(178.44,191.62) --
	(178.92,191.88) --
	(179.39,192.13) --
	(179.87,192.38) --
	(180.34,192.63) --
	(180.82,192.88) --
	(181.29,193.13) --
	(181.77,193.38) --
	(182.24,193.62) --
	(182.72,193.87) --
	(183.19,194.10) --
	(183.67,194.35) --
	(184.15,194.58) --
	(184.62,194.82) --
	(185.10,195.06) --
	(185.57,195.29) --
	(186.05,195.53) --
	(186.52,195.76) --
	(187.00,195.99) --
	(187.47,196.22) --
	(187.95,196.45) --
	(188.42,196.68) --
	(188.90,196.90) --
	(189.37,197.13) --
	(189.85,197.35) --
	(190.32,197.57) --
	(190.80,197.79) --
	(191.27,198.01) --
	(191.75,198.22) --
	(192.22,198.44) --
	(192.70,198.66) --
	(193.17,198.87) --
	(193.65,199.08) --
	(194.12,199.29) --
	(194.60,199.50) --
	(195.07,199.71) --
	(195.55,199.92) --
	(196.02,200.13) --
	(196.50,200.33) --
	(196.97,200.54) --
	(197.45,200.74) --
	(197.92,200.94) --
	(198.40,201.15) --
	(198.88,201.35) --
	(199.35,201.54) --
	(199.83,201.74) --
	(200.30,201.94) --
	(200.78,202.13) --
	(201.25,202.32) --
	(201.73,202.52) --
	(202.20,202.71) --
	(202.68,202.90) --
	(203.15,203.08) --
	(203.63,203.27) --
	(204.10,203.46) --
	(204.58,203.64) --
	(205.05,203.83) --
	(205.53,204.01) --
	(206.00,204.19) --
	(206.48,204.37) --
	(206.95,204.55) --
	(207.43,204.73) --
	(207.90,204.91) --
	(208.38,205.09) --
	(208.85,205.26) --
	(209.33,205.44) --
	(209.80,205.61) --
	(210.28,205.78) --
	(210.75,205.96) --
	(211.23,206.13) --
	(211.70,206.30) --
	(212.18,206.47) --
	(212.65,206.64) --
	(213.13,206.81) --
	(213.60,206.97) --
	(214.08,207.14) --
	(214.56,207.30) --
	(215.03,207.47) --
	(215.51,207.63) --
	(215.98,207.79) --
	(216.46,207.95) --
	(216.93,208.11) --
	(217.41,208.27) --
	(217.88,208.43) --
	(218.36,208.59) --
	(218.83,208.74) --
	(219.31,208.90) --
	(219.78,209.06) --
	(220.26,209.21) --
	(220.73,209.36) --
	(221.21,209.51) --
	(221.68,209.66) --
	(222.16,209.81) --
	(222.63,209.97) --
	(223.11,210.12) --
	(223.58,210.26) --
	(224.06,210.41) --
	(224.53,210.56) --
	(225.01,210.70) --
	(225.48,210.85) --
	(225.96,210.99) --
	(226.43,211.13) --
	(226.91,211.27) --
	(227.38,211.41) --
	(227.86,211.55) --
	(228.33,211.69) --
	(228.81,211.83) --
	(229.29,211.97) --
	(229.76,212.11) --
	(230.24,212.24) --
	(230.71,212.38) --
	(231.19,212.52) --
	(231.66,212.65) --
	(232.14,212.78) --
	(232.61,212.91) --
	(233.09,213.05) --
	(233.56,213.18) --
	(234.04,213.31) --
	(234.51,213.44) --
	(234.99,213.57) --
	(235.46,213.70) --
	(235.94,213.82) --
	(236.41,213.95) --
	(236.89,214.07) --
	(237.36,214.20) --
	(237.84,214.32) --
	(238.31,214.44) --
	(238.79,214.57) --
	(239.26,214.69) --
	(239.74,214.80) --
	(240.21,214.92) --
	(240.69,215.04) --
	(241.16,215.16) --
	(241.64,215.28) --
	(242.11,215.40) --
	(242.59,215.52) --
	(243.06,215.64) --
	(243.54,215.76) --
	(244.02,215.87) --
	(244.49,215.98) --
	(244.97,216.10) --
	(245.44,216.21) --
	(245.92,216.32) --
	(246.39,216.44) --
	(246.87,216.55) --
	(247.34,216.66) --
	(247.82,216.77) --
	(248.29,216.88) --
	(248.77,216.99) --
	(249.24,217.10) --
	(249.72,217.20) --
	(250.19,217.31) --
	(250.67,217.42) --
	(251.14,217.53) --
	(251.62,217.63) --
	(252.09,217.74) --
	(252.57,217.84) --
	(253.04,217.95) --
	(253.52,218.05) --
	(253.99,218.15) --
	(254.47,218.26) --
	(254.94,218.36) --
	(255.42,218.46) --
	(255.89,218.56) --
	(256.37,218.66) --
	(256.84,218.76) --
	(257.32,218.86) --
	(257.79,218.96) --
	(258.27,219.05) --
	(258.74,219.15) --
	(259.22,219.25) --
	(259.70,219.34) --
	(260.17,219.44) --
	(260.65,219.53) --
	(261.12,219.63) --
	(261.60,219.72) --
	(262.07,219.82) --
	(262.55,219.91) --
	(263.02,220.00) --
	(263.50,220.10) --
	(263.97,220.19) --
	(264.45,220.28) --
	(264.92,220.37) --
	(265.40,220.46) --
	(265.87,220.55) --
	(266.35,220.64) --
	(266.82,220.73) --
	(267.30,220.81) --
	(267.77,220.90) --
	(268.25,220.99) --
	(268.72,221.07) --
	(269.20,221.16) --
	(269.67,221.25) --
	(270.15,221.33) --
	(270.62,221.41) --
	(271.10,221.50) --
	(271.57,221.58) --
	(272.05,221.66) --
	(272.52,221.75) --
	(273.00,221.83) --
	(273.47,221.91) --
	(273.95,221.99) --
	(274.43,222.07) --
	(274.90,222.15) --
	(275.38,222.23) --
	(275.85,222.31) --
	(276.33,222.39) --
	(276.80,222.47) --
	(277.28,222.55) --
	(277.75,222.63) --
	(278.23,222.70) --
	(278.70,222.77) --
	(279.18,222.85) --
	(279.65,222.93) --
	(280.13,223.00) --
	(280.60,223.08) --
	(281.08,223.16) --
	(281.55,223.23) --
	(282.03,223.30) --
	(282.50,223.38) --
	(282.98,223.45) --
	(283.45,223.53) --
	(283.93,223.60) --
	(284.40,223.67) --
	(284.88,223.75) --
	(285.35,223.82) --
	(285.83,223.89) --
	(286.30,223.96) --
	(286.78,224.03) --
	(287.25,224.10) --
	(287.73,224.17) --
	(288.20,224.24) --
	(288.68,224.31) --
	(289.16,224.38) --
	(289.63,224.44) --
	(290.11,224.51) --
	(290.58,224.58) --
	(291.06,224.65) --
	(291.53,224.71) --
	(292.01,224.78) --
	(292.48,224.85) --
	(292.96,224.91) --
	(293.43,224.98) --
	(293.91,225.04) --
	(294.38,225.11) --
	(294.86,225.17) --
	(295.33,225.23) --
	(295.81,225.30) --
	(296.28,225.36) --
	(296.76,225.42) --
	(297.23,225.49) --
	(297.71,225.55) --
	(298.18,225.61) --
	(298.66,225.67) --
	(299.13,225.73) --
	(299.61,225.79) --
	(300.08,225.85) --
	(300.56,225.91) --
	(301.03,225.97) --
	(301.51,226.03) --
	(301.98,226.09) --
	(302.46,226.15) --
	(302.93,226.21) --
	(303.41,226.27) --
	(303.89,226.32) --
	(304.36,226.38) --
	(304.84,226.44) --
	(305.31,226.49) --
	(305.79,226.55) --
	(306.26,226.61) --
	(306.74,226.66) --
	(307.21,226.72) --
	(307.69,226.77) --
	(308.16,226.83) --
	(308.64,226.88) --
	(309.11,226.93) --
	(309.59,226.99) --
	(310.06,227.04) --
	(310.54,227.10) --
	(311.01,227.15) --
	(311.49,227.20) --
	(311.96,227.25) --
	(312.44,227.30) --
	(312.91,227.36) --
	(313.39,227.41) --
	(313.86,227.46) --
	(314.34,227.51) --
	(314.81,227.56) --
	(315.29,227.62) --
	(315.76,227.66) --
	(316.24,227.71) --
	(316.71,227.76) --
	(317.19,227.81) --
	(317.66,227.86) --
	(318.14,227.91) --
	(318.61,227.96) --
	(319.09,228.01) --
	(319.57,228.06) --
	(320.04,228.11) --
	(320.52,228.16) --
	(320.99,228.20) --
	(321.47,228.25) --
	(321.94,228.30) --
	(322.42,228.34) --
	(322.89,228.39) --
	(323.37,228.44) --
	(323.84,228.48) --
	(324.32,228.53) --
	(324.79,228.57) --
	(325.27,228.62) --
	(325.74,228.66) --
	(326.22,228.71) --
	(326.69,228.75) --
	(327.17,228.80) --
	(327.64,228.84) --
	(328.12,228.88) --
	(328.59,228.93) --
	(329.07,228.97) --
	(329.54,229.01) --
	(330.02,229.06) --
	(330.49,229.10) --
	(330.97,229.14) --
	(331.44,229.19) --
	(331.92,229.23) --
	(332.39,229.27) --
	(332.87,229.31) --
	(333.34,229.35) --
	(333.82,229.39) --
	(334.30,229.43) --
	(334.77,229.47) --
	(335.25,229.51) --
	(335.72,229.55) --
	(336.20,229.59) --
	(336.67,229.63) --
	(337.15,229.67) --
	(337.62,229.71) --
	(338.10,229.75) --
	(338.57,229.79) --
	(339.05,229.83) --
	(339.52,229.87) --
	(340.00,229.90) --
	(340.47,229.94) --
	(340.95,229.98) --
	(341.42,230.02) --
	(341.90,230.06) --
	(342.37,230.10) --
	(342.85,230.13) --
	(343.32,230.17) --
	(343.80,230.21) --
	(344.27,230.24) --
	(344.75,230.28) --
	(345.22,230.32) --
	(345.70,230.35) --
	(346.17,230.39) --
	(346.65,230.43) --
	(347.12,230.46) --
	(347.60,230.50) --
	(348.07,230.54) --
	(348.55,230.57) --
	(349.03,230.61) --
	(349.50,230.64) --
	(349.98,230.68) --
	(350.45,230.71) --
	(350.93,230.75) --
	(351.40,230.78) --
	(351.88,230.81) --
	(352.35,230.85) --
	(352.83,230.88) --
	(353.30,230.91) --
	(353.78,230.95) --
	(354.25,230.98) --
	(354.73,231.01) --
	(355.20,231.04) --
	(355.68,231.07) --
	(356.15,231.11) --
	(356.63,231.14) --
	(357.10,231.17) --
	(357.58,231.20) --
	(358.05,231.23) --
	(358.53,231.26) --
	(359.00,231.30) --
	(359.48,231.33) --
	(359.95,231.36) --
	(360.43,231.39) --
	(360.90,231.42) --
	(361.38,231.45) --
	(361.85,231.48) --
	(362.33,231.51) --
	(362.80,231.54) --
	(363.28,231.57) --
	(363.76,231.60) --
	(364.23,231.63) --
	(364.71,231.66) --
	(365.18,231.69) --
	(365.66,231.72) --
	(366.13,231.75) --
	(366.61,231.78) --
	(367.08,231.81) --
	(367.56,231.84) --
	(368.03,231.86) --
	(368.51,231.89) --
	(368.98,231.92) --
	(369.46,231.95) --
	(369.93,231.98) --
	(370.41,232.00) --
	(370.88,232.03) --
	(371.36,232.06) --
	(371.83,232.09) --
	(372.31,232.11) --
	(372.78,232.14) --
	(373.26,232.17) --
	(373.73,232.19) --
	(374.21,232.22) --
	(374.68,232.25) --
	(375.16,232.27) --
	(375.63,232.30) --
	(376.11,232.33) --
	(376.58,232.35) --
	(377.06,232.38) --
	(377.53,232.40) --
	(378.01,232.43) --
	(378.48,232.45) --
	(378.96,232.48) --
	(379.44,232.50) --
	(379.91,232.53) --
	(380.39,232.55) --
	(380.86,232.57) --
	(381.34,232.60) --
	(381.81,232.62) --
	(382.29,232.65) --
	(382.76,232.67) --
	(383.24,232.70) --
	(383.71,232.72) --
	(384.19,232.74) --
	(384.66,232.77) --
	(385.14,232.79) --
	(385.61,232.81) --
	(386.09,232.84) --
	(386.56,232.86) --
	(387.04,232.89) --
	(387.51,232.91) --
	(387.99,232.93) --
	(388.46,232.95) --
	(388.94,232.98) --
	(389.41,233.00) --
	(389.89,233.02) --
	(390.36,233.04) --
	(390.84,233.07) --
	(391.31,233.09) --
	(391.79,233.11) --
	(392.26,233.13) --
	(392.74,233.16) --
	(393.21,233.18) --
	(393.69,233.20) --
	(394.17,233.22) --
	(394.64,233.24) --
	(395.12,233.26);
\end{scope}
\begin{scope}
\path[clip] (  0.00,  0.00) rectangle (433.62,289.08);
\definecolor{drawColor}{RGB}{0,0,0}

\path[draw=drawColor,line width= 0.4pt,line join=round,line cap=round] ( 49.20, 92.23) -- ( 49.20,220.79);

\path[draw=drawColor,line width= 0.4pt,line join=round,line cap=round] ( 49.20, 92.23) -- ( 43.20, 92.23);

\path[draw=drawColor,line width= 0.4pt,line join=round,line cap=round] ( 49.20,124.37) -- ( 43.20,124.37);

\path[draw=drawColor,line width= 0.4pt,line join=round,line cap=round] ( 49.20,156.51) -- ( 43.20,156.51);

\path[draw=drawColor,line width= 0.4pt,line join=round,line cap=round] ( 49.20,188.65) -- ( 43.20,188.65);

\path[draw=drawColor,line width= 0.4pt,line join=round,line cap=round] ( 49.20,220.79) -- ( 43.20,220.79);

\node[text=drawColor,rotate= 90.00,anchor=base,inner sep=0pt, outer sep=0pt, scale=  1.00] at ( 34.80, 92.23) {-0.08};

\node[text=drawColor,rotate= 90.00,anchor=base,inner sep=0pt, outer sep=0pt, scale=  1.00] at ( 34.80,124.37) {-0.06};

\node[text=drawColor,rotate= 90.00,anchor=base,inner sep=0pt, outer sep=0pt, scale=  1.00] at ( 34.80,156.51) {-0.04};

\node[text=drawColor,rotate= 90.00,anchor=base,inner sep=0pt, outer sep=0pt, scale=  1.00] at ( 34.80,188.65) {-0.02};

\node[text=drawColor,rotate= 90.00,anchor=base,inner sep=0pt, outer sep=0pt, scale=  1.00] at ( 34.80,220.79) {0.00};

\path[draw=drawColor,line width= 0.4pt,line join=round,line cap=round] ( 49.20, 61.20) --
	(408.42, 61.20) --
	(408.42,239.88) --
	( 49.20,239.88) --
	cycle;
\end{scope}
\begin{scope}
\path[clip] (  0.00,  0.00) rectangle (433.62,289.08);
\definecolor{drawColor}{RGB}{0,0,0}

\node[text=drawColor,anchor=base,inner sep=0pt, outer sep=0pt, scale=  1.20] at (228.81, 15.60) {q};
\end{scope}
\begin{scope}
\path[clip] (  0.00,  0.00) rectangle (433.62,289.08);
\definecolor{drawColor}{RGB}{0,0,0}

\node[text=drawColor,rotate= 90.00,anchor=base,inner sep=0pt, outer sep=0pt, scale=  1.20] at ( 10.80,150.54) {$J'_{+}(q)$};

\path[draw=drawColor,line width= 0.4pt,line join=round,line cap=round] ( 62.50, 61.20) -- (395.12, 61.20);

\path[draw=drawColor,line width= 0.4pt,line join=round,line cap=round] ( 62.50, 61.20) -- ( 62.50, 55.20);

\path[draw=drawColor,line width= 0.4pt,line join=round,line cap=round] (110.02, 61.20) -- (110.02, 55.20);

\path[draw=drawColor,line width= 0.4pt,line join=round,line cap=round] (157.54, 61.20) -- (157.54, 55.20);

\path[draw=drawColor,line width= 0.4pt,line join=round,line cap=round] (205.05, 61.20) -- (205.05, 55.20);

\path[draw=drawColor,line width= 0.4pt,line join=round,line cap=round] (300.08, 61.20) -- (300.08, 55.20);

\path[draw=drawColor,line width= 0.4pt,line join=round,line cap=round] (347.60, 61.20) -- (347.60, 55.20);

\path[draw=drawColor,line width= 0.4pt,line join=round,line cap=round] (395.12, 61.20) -- (395.12, 55.20);

\node[text=drawColor,anchor=base,inner sep=0pt, outer sep=0pt, scale=  1.00] at ( 62.50, 39.60) {0};

\node[text=drawColor,anchor=base,inner sep=0pt, outer sep=0pt, scale=  1.00] at (110.02, 39.60) {1};

\node[text=drawColor,anchor=base,inner sep=0pt, outer sep=0pt, scale=  1.00] at (157.54, 39.60) {2};

\node[text=drawColor,anchor=base,inner sep=0pt, outer sep=0pt, scale=  1.00] at (205.05, 39.60) {3};

\node[text=drawColor,anchor=base,inner sep=0pt, outer sep=0pt, scale=  1.00] at (300.08, 39.60) {5};

\node[text=drawColor,anchor=base,inner sep=0pt, outer sep=0pt, scale=  1.00] at (347.60, 39.60) {6};

\node[text=drawColor,anchor=base,inner sep=0pt, outer sep=0pt, scale=  1.00] at (395.12, 39.60) {7};
\end{scope}
\begin{scope}
\path[clip] ( 49.20, 61.20) rectangle (408.42,239.88);
\definecolor{drawColor}{RGB}{255,0,0}

\path[draw=drawColor,line width= 0.4pt,dash pattern=on 4pt off 4pt ,line join=round,line cap=round] (267.20, 61.20) -- (267.20,239.88);
\end{scope}
\begin{scope}
\path[clip] (  0.00,  0.00) rectangle (433.62,289.08);
\definecolor{drawColor}{RGB}{255,0,0}

\path[draw=drawColor,line width= 0.4pt,line join=round,line cap=round] (267.20, 61.20) -- (267.20, 61.20);

\path[draw=drawColor,line width= 0.8pt,line join=round,line cap=round] (267.20, 61.20) -- (267.20, 55.20);

\node[text=drawColor,anchor=base,inner sep=0pt, outer sep=0pt, scale=  1.00] at (267.20, 39.60) {$q^*$};
\end{scope}
\end{tikzpicture}

%% file: cvar.tex
\begin{tikzpicture}[x=1pt,y=1pt]
\definecolor{fillColor}{RGB}{255,255,255}
\path[use as bounding box,fill=fillColor,fill opacity=0.00] (0,0) rectangle (433.62,289.08);
\begin{scope}
\path[clip] ( 48.00, 60.00) rectangle (204.81,265.08);
\definecolor{drawColor}{RGB}{0,0,0}

\path[draw=drawColor,line width= 0.4pt,line join=round,line cap=round] ( 53.81,257.48) --
	( 58.65,257.48) --
	( 63.49,257.48) --
	( 68.33,257.48) --
	( 73.17,257.48) --
	( 78.01,257.48) --
	( 82.85,257.48) --
	( 87.69,257.48) --
	( 92.53,257.48) --
	( 97.37,257.48) --
	(102.21,257.48);

\path[draw=drawColor,line width= 0.4pt,line join=round,line cap=round] (102.21,177.99) --
	(107.05,172.84) --
	(111.89,168.48) --
	(116.73,164.67) --
	(121.57,161.38) --
	(126.41,158.49) --
	(131.24,155.94) --
	(136.08,153.67) --
	(140.92,151.65) --
	(145.76,149.86) --
	(150.60,148.25) --
	(155.44,146.81) --
	(160.28,145.51) --
	(165.12,144.33) --
	(169.96,143.26) --
	(174.80,143.26);

\path[draw=drawColor,line width= 0.4pt,line join=round,line cap=round] (174.80, 84.82) --
	(179.64, 81.08) --
	(184.48, 77.49) --
	(189.32, 74.06) --
	(194.16, 70.76) --
	(199.00, 67.60);
\end{scope}
\begin{scope}
\path[clip] (  0.00,  0.00) rectangle (433.62,289.08);
\definecolor{drawColor}{RGB}{0,0,0}

\path[draw=drawColor,line width= 0.4pt,line join=round,line cap=round] ( 48.00, 67.10) -- ( 48.00,255.79);

\path[draw=drawColor,line width= 0.4pt,line join=round,line cap=round] ( 48.00, 67.10) -- ( 42.00, 67.10);

\path[draw=drawColor,line width= 0.4pt,line join=round,line cap=round] ( 48.00, 94.06) -- ( 42.00, 94.06);

\path[draw=drawColor,line width= 0.4pt,line join=round,line cap=round] ( 48.00,121.01) -- ( 42.00,121.01);

\path[draw=drawColor,line width= 0.4pt,line join=round,line cap=round] ( 48.00,147.97) -- ( 42.00,147.97);

\path[draw=drawColor,line width= 0.4pt,line join=round,line cap=round] ( 48.00,174.92) -- ( 42.00,174.92);

\path[draw=drawColor,line width= 0.4pt,line join=round,line cap=round] ( 48.00,201.88) -- ( 42.00,201.88);

\path[draw=drawColor,line width= 0.4pt,line join=round,line cap=round] ( 48.00,228.83) -- ( 42.00,228.83);

\path[draw=drawColor,line width= 0.4pt,line join=round,line cap=round] ( 48.00,255.79) -- ( 42.00,255.79);

\node[text=drawColor,rotate= 90.00,anchor=base,inner sep=0pt, outer sep=0pt, scale=  1.00] at ( 37.20, 67.10) {3.0};

\node[text=drawColor,rotate= 90.00,anchor=base,inner sep=0pt, outer sep=0pt, scale=  1.00] at ( 37.20, 94.06) {3.5};

\node[text=drawColor,rotate= 90.00,anchor=base,inner sep=0pt, outer sep=0pt, scale=  1.00] at ( 37.20,121.01) {4.0};

\node[text=drawColor,rotate= 90.00,anchor=base,inner sep=0pt, outer sep=0pt, scale=  1.00] at ( 37.20,147.97) {4.5};

\node[text=drawColor,rotate= 90.00,anchor=base,inner sep=0pt, outer sep=0pt, scale=  1.00] at ( 37.20,174.92) {5.0};

\node[text=drawColor,rotate= 90.00,anchor=base,inner sep=0pt, outer sep=0pt, scale=  1.00] at ( 37.20,201.88) {5.5};

\node[text=drawColor,rotate= 90.00,anchor=base,inner sep=0pt, outer sep=0pt, scale=  1.00] at ( 37.20,228.83) {6.0};

\node[text=drawColor,rotate= 90.00,anchor=base,inner sep=0pt, outer sep=0pt, scale=  1.00] at ( 37.20,255.79) {6.5};

\path[draw=drawColor,line width= 0.4pt,line join=round,line cap=round] ( 48.00, 60.00) --
	(204.81, 60.00) --
	(204.81,265.08) --
	( 48.00,265.08) --
	cycle;
\end{scope}
\begin{scope}
\path[clip] (  0.00,  0.00) rectangle (216.81,289.08);
\definecolor{drawColor}{RGB}{0,0,0}

\node[text=drawColor,anchor=base,inner sep=0pt, outer sep=0pt, scale=  1.20] at (126.41, 14.40) {$\lambda$};

\node[text=drawColor,rotate= 90.00,anchor=base,inner sep=0pt, outer sep=0pt, scale=  1.20] at (  9.60,162.54) {$\mathrm{CVaR}_{\alpha}(Z^{*})$};
\end{scope}
\begin{scope}
\path[clip] (  0.00,  0.00) rectangle (433.62,289.08);
\definecolor{drawColor}{RGB}{0,0,0}

\path[draw=drawColor,line width= 0.4pt,line join=round,line cap=round] ( 53.81, 60.00) -- (204.81, 60.00);

\path[draw=drawColor,line width= 0.4pt,line join=round,line cap=round] ( 53.81, 60.00) -- ( 53.81, 54.00);

\path[draw=drawColor,line width= 0.4pt,line join=round,line cap=round] (150.60, 60.00) -- (150.60, 54.00);

\path[draw=drawColor,line width= 0.4pt,line join=round,line cap=round] (199.00, 60.00) -- (199.00, 54.00);

\node[text=drawColor,anchor=base,inner sep=0pt, outer sep=0pt, scale=  1.00] at ( 53.81, 42.00) {0.00};

\node[text=drawColor,anchor=base,inner sep=0pt, outer sep=0pt, scale=  1.00] at (150.60, 42.00) {0.02};

\node[text=drawColor,anchor=base,inner sep=0pt, outer sep=0pt, scale=  1.00] at (199.00, 42.00) {0.03};
\end{scope}
\begin{scope}
\path[clip] ( 48.00, 60.00) rectangle (204.81,265.08);
\definecolor{drawColor}{RGB}{255,0,0}

\path[draw=drawColor,line width= 0.4pt,dash pattern=on 4pt off 4pt ,line join=round,line cap=round] (105.00, 60.00) -- (105.00,265.08);

\path[draw=drawColor,line width= 0.4pt,dash pattern=on 4pt off 4pt ,line join=round,line cap=round] ( 48.00,174.92) -- (204.81,174.92);
\end{scope}
\begin{scope}
\path[clip] (  0.00,  0.00) rectangle (433.62,289.08);
\definecolor{drawColor}{RGB}{255,0,0}

\path[draw=drawColor,line width= 0.4pt,line join=round,line cap=round] (105.00, 60.00) -- (105.00, 60.00);

\path[draw=drawColor,line width= 0.8pt,line join=round,line cap=round] (105.00, 60.00) -- (105.00, 54.00);

\node[text=drawColor,anchor=base,inner sep=0pt, outer sep=0pt, scale=  1.00] at (105.00, 42.00) {$\lambda^*$};
\end{scope}
\begin{scope}
\path[clip] (264.81, 60.00) rectangle (421.62,265.08);
\definecolor{drawColor}{RGB}{0,0,0}

\path[draw=drawColor,line width= 0.4pt,line join=round,line cap=round] (270.62,257.48) --
	(275.62,257.48) --
	(280.63,257.48) --
	(285.64,257.48) --
	(290.64,257.48) --
	(295.65,257.48) --
	(300.66,257.48) --
	(305.66,257.48) --
	(310.67,257.48) --
	(315.68,257.48) --
	(320.68,239.15) --
	(325.69,223.15) --
	(330.70,208.81) --
	(335.70,196.06) --
	(340.71,184.51) --
	(345.72,174.09) --
	(350.73,164.56) --
	(355.73,155.83) --
	(360.74,147.92) --
	(365.75,140.65) --
	(370.75,133.95) --
	(375.76,127.74) --
	(380.77,121.98) --
	(385.77,116.65) --
	(390.78,111.63) --
	(395.79,102.07) --
	(400.79, 92.89) --
	(405.80, 84.13) --
	(410.81, 75.68) --
	(415.81, 67.60);
\end{scope}
\begin{scope}
\path[clip] (  0.00,  0.00) rectangle (433.62,289.08);
\definecolor{drawColor}{RGB}{0,0,0}

\path[draw=drawColor,line width= 0.4pt,line join=round,line cap=round] (265.61, 60.00) -- (415.81, 60.00);

\path[draw=drawColor,line width= 0.4pt,line join=round,line cap=round] (265.61, 60.00) -- (265.61, 54.00);

\path[draw=drawColor,line width= 0.4pt,line join=round,line cap=round] (290.64, 60.00) -- (290.64, 54.00);

\path[draw=drawColor,line width= 0.4pt,line join=round,line cap=round] (315.68, 60.00) -- (315.68, 54.00);

\path[draw=drawColor,line width= 0.4pt,line join=round,line cap=round] (340.71, 60.00) -- (340.71, 54.00);

\path[draw=drawColor,line width= 0.4pt,line join=round,line cap=round] (365.75, 60.00) -- (365.75, 54.00);

\path[draw=drawColor,line width= 0.4pt,line join=round,line cap=round] (390.78, 60.00) -- (390.78, 54.00);

\path[draw=drawColor,line width= 0.4pt,line join=round,line cap=round] (415.81, 60.00) -- (415.81, 54.00);

\node[text=drawColor,anchor=base,inner sep=0pt, outer sep=0pt, scale=  1.00] at (265.61, 42.00) {0.000};

\node[text=drawColor,anchor=base,inner sep=0pt, outer sep=0pt, scale=  1.00] at (315.68, 42.00) {0.010};

\node[text=drawColor,anchor=base,inner sep=0pt, outer sep=0pt, scale=  1.00] at (365.75, 42.00) {0.020};

\node[text=drawColor,anchor=base,inner sep=0pt, outer sep=0pt, scale=  1.00] at (415.81, 42.00) {0.030};

\path[draw=drawColor,line width= 0.4pt,line join=round,line cap=round] (264.81, 66.34) -- (264.81,259.32);

\path[draw=drawColor,line width= 0.4pt,line join=round,line cap=round] (264.81, 66.34) -- (258.81, 66.34);

\path[draw=drawColor,line width= 0.4pt,line join=round,line cap=round] (264.81, 93.91) -- (258.81, 93.91);

\path[draw=drawColor,line width= 0.4pt,line join=round,line cap=round] (264.81,121.48) -- (258.81,121.48);

\path[draw=drawColor,line width= 0.4pt,line join=round,line cap=round] (264.81,149.05) -- (258.81,149.05);

\path[draw=drawColor,line width= 0.4pt,line join=round,line cap=round] (264.81,176.62) -- (258.81,176.62);

\path[draw=drawColor,line width= 0.4pt,line join=round,line cap=round] (264.81,204.18) -- (258.81,204.18);

\path[draw=drawColor,line width= 0.4pt,line join=round,line cap=round] (264.81,231.75) -- (258.81,231.75);

\path[draw=drawColor,line width= 0.4pt,line join=round,line cap=round] (264.81,259.32) -- (258.81,259.32);

\node[text=drawColor,rotate= 90.00,anchor=base,inner sep=0pt, outer sep=0pt, scale=  1.00] at (254.01, 66.34) {3.0};

\node[text=drawColor,rotate= 90.00,anchor=base,inner sep=0pt, outer sep=0pt, scale=  1.00] at (254.01, 93.91) {3.2};

\node[text=drawColor,rotate= 90.00,anchor=base,inner sep=0pt, outer sep=0pt, scale=  1.00] at (254.01,121.48) {3.4};

\node[text=drawColor,rotate= 90.00,anchor=base,inner sep=0pt, outer sep=0pt, scale=  1.00] at (254.01,149.05) {3.6};

\node[text=drawColor,rotate= 90.00,anchor=base,inner sep=0pt, outer sep=0pt, scale=  1.00] at (254.01,176.62) {3.8};

\node[text=drawColor,rotate= 90.00,anchor=base,inner sep=0pt, outer sep=0pt, scale=  1.00] at (254.01,204.18) {4.0};

\node[text=drawColor,rotate= 90.00,anchor=base,inner sep=0pt, outer sep=0pt, scale=  1.00] at (254.01,231.75) {4.2};

\node[text=drawColor,rotate= 90.00,anchor=base,inner sep=0pt, outer sep=0pt, scale=  1.00] at (254.01,259.32) {4.4};

\path[draw=drawColor,line width= 0.4pt,line join=round,line cap=round] (264.81, 60.00) --
	(421.62, 60.00) --
	(421.62,265.08) --
	(264.81,265.08) --
	cycle;
\end{scope}
\begin{scope}
\path[clip] (216.81,  0.00) rectangle (433.62,289.08);
\definecolor{drawColor}{RGB}{0,0,0}

\node[text=drawColor,anchor=base,inner sep=0pt, outer sep=0pt, scale=  1.20] at (343.21, 14.40) {$\lambda$};

\node[text=drawColor,rotate= 90.00,anchor=base,inner sep=0pt, outer sep=0pt, scale=  1.20] at (226.41,162.54) {$q^*$};
\end{scope}
\end{tikzpicture}